\documentclass{amsart}

\usepackage{enumerate}
\usepackage{amsmath}
\usepackage{amssymb,latexsym}
\usepackage{amsthm}
\usepackage{color}
\usepackage{enumitem}
\usepackage{graphicx}
\usepackage{amscd}
\usepackage{hyperref}

\def\cV{\mathcal V}
\newcommand{\F}{{\mathbb F}}
\newcommand{\cC}{{\mathcal C}}

\newcommand{\K}{{\mathbb K}}

\newcommand{\Fix}{\mathrm Fix}
\newcommand{\la}{\langle}
\newcommand{\ra}{\rangle}

\renewcommand{\mod}{\hbox{{\rm mod}\,}}

\def \bsigma{\boldsymbol{\sigma}}

\newtheorem{theorem}{Theorem}[section]
\newtheorem{lemma}[theorem]{Lemma}
\newtheorem{corollary}[theorem]{Corollary}

\DeclareMathOperator{\PG}{{PG}}

\DeclareMathOperator{\Aut}{{Aut}}

\DeclareMathOperator{\Gal}{Gal}

\newcommand\spanset[1]{\ensuremath \langle #1 \rangle}

%%%%%%%%%%%% upright %%%%%%%%%%%%%%%%%%%
\theoremstyle{definition}%% Hans
\newtheorem{definition}[theorem]{Definition}
\newtheorem{example}[theorem]{Example}

%% Use this for footnotes.
\definecolor{byzantium}{rgb}{0.44, 0.16, 0.39}
\def\vale#1 {\fbox {\footnote {\ }}\ \footnotetext { From Vale: {\color{byzantium}#1}}}

\def\nico#1 {\fbox {\footnote {\ }}\ \footnotetext { From Nico: {\color{red}#1}}}

\def\giovanni#1 {\fbox {\footnote {\ }}\ \footnotetext { From Giovanni: {\color{blue}#1}}}

\title[$(d,\bsigma)$-Veronese variety and some applications]{$(d,\bsigma)$-Veronese variety and some  applications}

\author{N. Durante\textsuperscript{\,*}}
\address{\textsuperscript{*}Dipartimento di Matematica e Applicazioni ``Renato Caccioppoli''\\
	Universit\`a degli Studi di Napoli ``Federico II''\\
	Via Vicinale Cupa Cintia, 26, 80126 Napoli, Italy}
\email{\{giovanni.longobardi,ndurante\}@unina.it}
\author{G. Longobardi\textsuperscript{\,*}}

\author{V. Pepe\textsuperscript{\,$\dagger$}}
\address{\textsuperscript{$\dagger$}Dipartimento di Scienze di Base ed Applicate per l'Ingegneria\\
'Sapienza' Università di Roma\\
Via Antonio Scarpa, 14\\
00161 Roma, Italy}
\email{valepepe@sbai.uniroma1.it}

\begin{document}
\maketitle
\begin{abstract}
Let $\K$ be the Galois field $\F_{q^t}$ of order $q^t, q=p^e, p$ a prime, $A=\Aut(\K)$ be the automorphism group of $\K$ and  $\boldsymbol{\sigma}=(\sigma_0,\ldots, \sigma_{d-1}) \in A^d$, $d \geq 1$. In this paper  the following generalization of the Veronese map is studied:
$$
\nu_{d,\boldsymbol{\sigma}}   :  \spanset{v} \in \PG(n-1,\K) \longrightarrow \spanset{v^{\sigma_0} \otimes v^{\sigma_1} \otimes \cdots \otimes v^{\sigma_{d-1}}}\in \PG (n^d-1,\K ).
$$
Its image will be called  the $(d,\bsigma)$-\textit{Veronese variety} $\cV_{d,\bsigma}$.
 For $d=t$, $\sigma$ a generator of $\mathrm{Gal}(\F_{q^t}|\F_{q})$ and $\bsigma=(1,\sigma,\sigma^2,\ldots,\sigma^{t-1})$,  the $(t,\bsigma)$-Veronese variety $\cV_{t,\bsigma}$
 is the variety studied in \cite{spread,normal,pepe11}. Such a variety is the Grassmann embedding of the Desarguesian spread of $\PG(nt-1,\F_q)$ and it has been used to construct codes \cite{giuzzipepe13} and (partial) ovoids of quadrics, see \cite{normal,pepe12}.
Here, we will show that  $\cV_{d,\bsigma}$  is the Grassmann embedding of a normal rational scroll  and any $d+1$ points of it are linearly independent. We give a characterization of $d+2$ linearly dependent points of $\cV_{d,\bsigma}$ and for some choices of parameters, $\cV_{p,\bsigma}$ is the normal rational curve; for $p=2$, it can be the Segre's arc of $\PG(3,q^t)$; for $p=3$ $\cV_{p,\bsigma}$ can be also a $|\cV_{p,\bsigma}|$-track of $\PG(5,q^t)$. Finally, investigate the link between such points sets and a linear code $\cC_{d,\bsigma}$ that can be associated to the variety, obtaining examples of MDS and almost MDS codes.
\end{abstract}

\section{Introduction}

Let $V=V(n,\K)$ be an $n$-dimensional vector space over a field $\K$, we will denote by $\PG(V)$ as well as $\PG(n-1,\K)$ the projective space induced by it.

We refer to \cite{Harris} for the definition of dimension, degree, smoothness and tangent space of an algebraic variety and for the methods and techniques used to study classical varieties.

The Veronese variety $\mathcal{V}_d$ of degree $d$ and dimension $n-1$ is a classical algebraic variety widely studied over fields of any characteristic \cite{Harris,GGG} and it is the image of the Veronese map
\begin{equation*}
\nu_d: (x_0,x_1,\ldots,x_{n-1}) \in \PG(n-1,\K) \longrightarrow (\ldots,X_I,\ldots) \in \PG\left(\binom{n+d-1}{d}-1,\K\right)
\end{equation*}

where $X_I$ ranges over all the possible monomials of degree $d$ in $x_0,x_1,\ldots,x_{n-1}$.

The Veronese map can be defined also by
\begin{equation*}
\nu_{d}   :  \spanset{v} \in \PG(n-1,\K) \longrightarrow \spanset{v \otimes v \otimes \cdots \otimes v}\in \PG\left(\binom{n+d-1}{d}-1,\K\right).
\end{equation*}

Now, let $V_i$ be  $n_i$-dimensional vector spaces over the field $\K$, $i=0,1,\ldots,d-1$. A \textit{Segre variety of type} $(n_0,n_1,\ldots,n_{d-1})$ in $\PG\bigl (\bigotimes_{i=0}^{d-1} V_i \bigr )$ is the set

\begin{equation}\label{tensor-Segre}
\Sigma_{n_0-1,n_1-1,\ldots,n_{d-1}-1}= \bigl \{\spanset{v_0 \otimes v_1 \otimes \cdots \otimes v_{d-1}} \,\,|\,\, v_i \in V_i \setminus \{0\}, \, i=0,1,\ldots,d-1 \bigr\}
\end{equation}

If $n_0=\ldots=n_{d-1}=n$, we write $\Sigma_{(n-1)^d}$ instead of $\Sigma_{n-1,n-1,\ldots,n-1}$. Then it is clear that $\cV_{d}$ turns out to be a linear section of the Segre variety product of $\PG(n-1,\K)$ for itself $d$ times.

If $\mathbf{\zeta}$ is a collineation of $\PG \bigl (V^{\otimes d} \bigl )$ fixing $\Sigma_{(n-1)^d}$, then there exist $\zeta_i, i=0,1,\ldots,d-1$ semilinear maps of $\PG(V)$, with the same companion field automorphism, and a permutation $\tau$ on $\{0,1,\ldots,d-1\}$ such that

$$
\langle v_0 \otimes v_1 \otimes \cdots \otimes v_{d-1} \rangle ^{\mathbf{\zeta}}= \langle v_{\tau(0)}^{\zeta_0}\otimes v_{\tau(1)}^{\zeta_1}\otimes \cdots \otimes v_{\tau(d-1)}^{\zeta_{d-1}} \rangle,
$$

for a proof of this in positive characteristic see \cite{westwick}.

Let  $\mathcal{L}_h$ be the set of all projective subspaces of dimension $h$ of $\PG(n-1,\K)$, and consider
\begin{equation*}
g_{n,h}: \spanset{v_0, v_1,v_2,\ldots,v_h} \in \mathcal{L}_h \longrightarrow \spanset{v_0 \wedge v_1 \wedge v_2 \wedge \cdots \wedge v_h} \in \PG \bigl ( \bigwedge\hspace{-0,15cm}\,^{h+1}V \bigr).
\end{equation*}
where $\wedge$ is the wedge product and $\bigwedge^{h+1}V$ the $(h+1)$-th exterior power of $V$. This map is called \textit{Grassmann embedding} and its image $\mathcal{G}_{n,h}(V)$ is called \textit{Grassmanian of subspaces of dimension} $h$ of $\PG(V)$. It is well-known that $\mathcal{G}_{n,h}(V)$ is an algebraic variety which is the complete intersection of certain quadrics, see \cite{Harris}.\\

\noindent Let $\K$ be the Galois field $\F_{q^t}$ of order $q^t$, $A=\Aut(\K)$ be the automorphism group of $\K$ and  $\boldsymbol{\sigma}=(\sigma_0,\ldots, \sigma_{d-1}) \in A^d$, $d \geq 1$. The aim of this paper is to study the following generalization of the Veronese map
\begin{equation*}
\nu_{d,\boldsymbol{\sigma}}   :  \spanset{v} \in \PG(n-1,\K) \longrightarrow \spanset{v^{\sigma_0} \otimes v^{\sigma_1} \otimes \cdots \otimes v^{\sigma_{d-1}}}\in \PG (n^d-1,\K )
\end{equation*}
and some properties of its image  that will be called here the $(d,\bsigma)$-\textit{Veronese variety} $\cV_{d,\bsigma}$.
 For $d=t$, $\sigma$ a generator of $\mathrm{Gal}(\F_{q^t}|\F_{q})$ and $\bsigma=(1,\sigma,\sigma^2,\ldots,\sigma^{t-1})$,  the $(t,\bsigma)$-Veronese variety $\cV_{t,\bsigma}$
 is the variety studied in \cite{spread,normal,pepe11}. Such a variety is the Grassmann embedding of the Desarguesian spread of $\PG(nt-1,\F_q)$ and it has been used to construct codes \cite{giuzzipepe15} and (partial) ovoids of quadrics, see \cite{normal,pepe12}.\\
A $[\nu,\kappa]$-\textit{linear code} $\cC$ is a subspace of the vector space $\F_{q}^\nu$ of dimension $\kappa$. The \textit{weight} of a codeword is the number of its entries that are nonzero and the \textit{Hamming distance} between two codewords is the number of entries in which they differ.  The distance $\delta$ of a linear code is the minimum distance between distinct codewords and it is equals to the minimum weight. A linear code of length $\nu$, dimension $\kappa$, and minimum distance $\delta$ is called a $[\nu,\kappa,\delta]$-code.  A matrix $H$ of order $(\nu-\kappa) \times \nu$ such that
\begin{equation*}
\mathbf{x}H^T=\mathbf{0} \hspace{0.5cm} \textnormal{for all} \,\,\, \mathbf{x} \in \cC
\end{equation*}
is called  a \textit{parity check matrix} for $\cC$. The minimum weight, and hence the minimum distance, of $\cC$ is at least $w$ if and only if any $w-1$ columns of $H$ are linearly independent  \cite[Theorem 10, p. 33]{MC}. Each linear $[\nu,\kappa,\delta]$-code $\cC$ satisfies the following inequality
$$ \delta \leq \nu-\kappa+1,$$
called \textit{Singleton bound}. If  $\delta=\nu-\kappa+1$, $\cC$  is called \textit{maximum distance separable} or \textit{MDS},  while if $\delta=\nu-\kappa$ the code is called \textit{almost MDS}.
These can be related to some subsets of points in the projective space. More precisely,  $\cC$ is a $[\nu,\kappa,\delta]$-linear code if and only if the columns of its parity check matrix $H$ can be seen as $\nu$ points in $\PG(\nu-\kappa-1,q)$ each $\delta-1$ of which are in general position, \cite[Theorem 1]{deboer}.
Then, the existence of MDS or almost MDS linear codes is equivalent to the existence of arcs  or tracks in projective spaces, respectively.
\begin{definition}
A $k$-arc is a set of $k$ points in $\PG(r,q)$ such that $r+1$ of them are in general position.
An $\ell$-\textit{track} is a set of $\ell$ points in $\PG(r,q)$ such that every $r$ of them are in general position.
\end{definition}
Here, we study the variety $\cV_{d,\bsigma}$ and we will prove that it is the Grassmann embedding of a normal rational scroll and that any $d+1$ points of it are in general position, i.e. any $d+1$ points of $\cV_{d,\bsigma}$ are linearly independent. Moreover, we give a characterization of $d+2$ linearly dependent points of this variety and investigate how such a property is interesting for a linear code $\cC_{d,\bsigma}$ that can be associated to the variety.

\section{The variety $\cV_{d,\bsigma}$}

Let $V=V(n,\K)$ be an $n$-dimensional vector space over the field $\K$ and $\PG(V)=\PG(n-1,\K)$ be the induced projective space.
 In particular, if $\K$ is the Galois field of order $q^t$, we will denote the projective space by $\PG(n-1,q^t)$

Let $A=\Aut(\K)$ be the automorphism group of $\K$ and  $\boldsymbol{\sigma}=(\sigma_0,\ldots, \sigma_{d-1}) \in A^d$, $d \geq 1$, and define the map
\begin{equation}\label{veronesemap}
\nu_{d,\boldsymbol{\sigma}}   :  \spanset{v} \in \PG(V) \longrightarrow \spanset{v^{\sigma_0} \otimes v^{\sigma_1} \otimes \cdots \otimes v^{\sigma_{d-1}}}\in \PG \bigl (V^{\otimes d} \bigl ).
\end{equation}

Up to the action of the group P$\Gamma$L(V), we may assume that $\sigma_0=1$. It is clear that the map $\nu_{d,\bsigma}$ is an injection of $\PG(V)$ into $\PG(V^{\otimes d})$ by the injectivity of the Segre map.

We will call $\nu_{d,\boldsymbol{\sigma}}$ the $(d,\boldsymbol{\sigma})$-\textit{Veronese embedding} and, as defined before, its image $\cV_{d,\boldsymbol{\sigma}}$ the $(d,\boldsymbol{\sigma})$-Veronese variety. Then $\cV_{d,\bsigma}$ is a rational variety of dimension $n-1$ in $\PG(N-1,\K)$, $N=n^d$ and it has as many points as $\PG(n-1,q^t)$, see \cite{GGG, Harris}.\\

As a consequence of \cite[Theorem 3.5 and 3.8]{westwick}, one gets the following 
\begin{theorem}
Let $\zeta$ be a collineation of $\PG \bigl (V^{\otimes d} \bigl )$. Then $\zeta$ fixes $\cV_{d,\boldsymbol{\sigma}}$ if and only if
\begin{equation*}
 \langle v\otimes v^{\sigma_1}\otimes \cdots \otimes v^{\sigma_{d-1}} \rangle ^{\mathbf{\zeta}}=  \langle v^{\zeta_0}\otimes v^{ \zeta_0\sigma_1}\otimes \cdots \otimes v^{ \zeta_{0}\sigma_{d-1}} \rangle \quad \textnormal{for any $v \in V \setminus \{0\}$}
\end{equation*}
where $\zeta_0$, is a bijective semilinear map of $V$.
\end{theorem}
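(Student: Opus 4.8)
The plan is to deduce the statement from the factor-wise description of the full stabiliser of the Segre variety $\Sigma_{(n-1)^d}$ recalled before, exploiting the fact that every point of $\cV_{d,\bsigma}$ is a decomposable tensor and hence $\cV_{d,\bsigma}\subseteq\Sigma_{(n-1)^d}$. Concretely, I would first invoke \cite[Theorems 3.5 and 3.8]{westwick} to pass from ``$\zeta$ fixes $\cV_{d,\bsigma}$'' to ``$\zeta$ stabilises $\Sigma_{(n-1)^d}$'', and then feed this into the description
\[
\langle v_0\otimes\cdots\otimes v_{d-1}\rangle^{\zeta}=\langle v_{\tau(0)}^{\zeta_0}\otimes\cdots\otimes v_{\tau(d-1)}^{\zeta_{d-1}}\rangle,
\]
with $\zeta_0,\dots,\zeta_{d-1}$ bijective semilinear maps of $V$ sharing a common companion automorphism $\beta$ and $\tau$ a permutation of $\{0,\dots,d-1\}$. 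The two implications are then handled separately.

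For the ``if'' implication the verification is immediate: writing $w=v^{\zeta_0}$ and reading $v^{\zeta_0\sigma_i}$ as $(v^{\zeta_0})^{\sigma_i}=w^{\sigma_i}$, the right-hand side of the stated equivalence is $\langle w\otimes w^{\sigma_1}\otimes\cdots\otimes w^{\sigma_{d-1}}\rangle=\nu_{d,\bsigma}(\langle w\rangle)$; since $\zeta_0$ is a bijection, $\langle w\rangle$ runs over all of $\PG(V)$ as $\langle v\rangle$ does, so $\zeta$ fixes $\cV_{d,\bsigma}$.

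For the ``only if'' implication I would evaluate the factor-wise form above along the twisted diagonal $v_i=v^{\sigma_i}$ (recall $\sigma_0=1$). The image of $\nu_{d,\bsigma}(\langle v\rangle)$ is then $\langle v^{\sigma_{\tau(0)}\zeta_0}\otimes\cdots\otimes v^{\sigma_{\tau(d-1)}\zeta_{d-1}}\rangle$, and the hypothesis that $\zeta$ stabilises $\cV_{d,\bsigma}$ forces this to be again a Veronese point $\langle w\otimes w^{\sigma_1}\otimes\cdots\otimes w^{\sigma_{d-1}}\rangle$ for some $w=w(v)$. Because a decomposable tensor determines each of its slots up to a scalar, matching slot $0$ gives $\langle w\rangle=\langle v^{\sigma_{\tau(0)}\zeta_0}\rangle$, and matching slot $i$ forces the semilinear maps $\sigma_{\tau(i)}\zeta_i$ and $\sigma_{\tau(0)}\zeta_0\sigma_i$ to coincide for every $v$, hence to differ only by a scalar. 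Setting $\zeta_0':=\sigma_{\tau(0)}\zeta_0$, which is again bijective and semilinear, this is precisely the claimed identity with $\zeta_0'$ playing the role of $\zeta_0$. Comparing companion automorphisms in these equalities and using that $\Aut(\F_{q^t})$ is cyclic, hence abelian, reconciles the common companion $\beta$ of the $\zeta_i$ with the twists $\sigma_i$ and yields the compatibility $\sigma_{\tau(i)}=\sigma_{\tau(0)}\sigma_i$ that lets the permutation be absorbed into the diagonal form.

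The step I expect to be the main obstacle is the first one: justifying that a collineation fixing only the subvariety $\cV_{d,\bsigma}$ must already stabilise the ambient Segre variety $\Sigma_{(n-1)^d}$, so that the factor-wise structure theorem becomes applicable. This is exactly the content borrowed from \cite[Theorems 3.5 and 3.8]{westwick}; morally it rests on the twisted diagonal being ``rich enough'' that its secant and tangent configuration recovers the product structure, and I would take care to check that the hypotheses of those theorems genuinely apply to $\nu_{d,\bsigma}$ and not merely to the untwisted Veronese map. Once that reduction is secured, everything else is the bookkeeping of scalars and field automorphisms described above.
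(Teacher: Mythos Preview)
The paper provides no proof of this theorem at all; it is stated with the preamble ``As a consequence of \cite[Theorem 3.5 and 3.8]{westwick}, one gets the following'' and nothing more. Your proposal is therefore not competing against a proof but against a bare citation, and your overall strategy---reduce to the factor-wise description of the stabiliser of $\Sigma_{(n-1)^d}$ coming from Westwick, then do bookkeeping on the twisted diagonal---is precisely the intended route.

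That said, you correctly put your finger on the one genuine issue, and it is worth being explicit about it: Westwick's results characterise semilinear maps preserving the \emph{full} set of decomposable tensors, not maps preserving an arbitrary subvariety thereof. Your first step, ``$\zeta$ fixes $\cV_{d,\bsigma}$ $\Rightarrow$ $\zeta$ stabilises $\Sigma_{(n-1)^d}$'', is not literally what \cite{westwick} states. What one presumably extracts from Westwick is rather that the relevant rank-one structure (tangent spaces, maximal linear subspaces through a point, etc.) of $\Sigma_{(n-1)^d}$ can be reconstructed from $\cV_{d,\bsigma}$ because the latter spans and meets each factor direction; this is the ``rich enough'' heuristic you mention. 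The paper does not spell this reconstruction out either, so you are not missing anything relative to the source, but if you want a self-contained proof you should not expect the implication to fall out of Westwick's statements verbatim. The remainder of your argument---the ``if'' direction and the slot-by-slot matching for ``only if''---is clean and correct.
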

Note that applying the map
$$
\langle v\otimes v^{\sigma_1}\otimes \cdots \otimes v^{\sigma_{d-1}}\rangle ^{\mathbf{\zeta} }= \langle v^{\zeta_0}\otimes v^{\zeta_1\sigma_1}\otimes \cdots \otimes v^{\zeta_{d-1}\sigma_{d-1}} \rangle$$
where  $\zeta_i$ is a bijective semilinear map, we get a subvariety of $\Sigma_{(n-1)^d}$ projectively equivalent to $\cV_{d,\boldsymbol{\sigma}}$.

Although many of the results also hold in the case of a general field, from now on it will be assumed, that $\K$ is the Galois field $\F_{q^t}$ of $q^t$ elements and  $\boldsymbol{\sigma}=(\sigma_0,\sigma_1,\ldots,\sigma_{d-1}) \in G^d$ with $G=\Gal(\F_{q^t} |\, \F_q)$.
Moreover, since any element $\sigma_i \in G$ is a map of the type $\sigma_i : x \mapsto x^{q^{h_i}}$ with $0 \leq h_i < t$ and  $0 \leq i \leq d-1$, hereafter we will suppose that
\begin{equation*}
\bsigma=(\underbrace{\sigma_0,\ldots,\sigma_0}_{d_0\, \textnormal{times}},\underbrace{\sigma_1,\ldots,\sigma_1}_{d_1\,\textnormal{times}},\ldots,\underbrace{\sigma_{m},\ldots,\sigma_{m}}_{d_m\, \textnormal{times}})
\end{equation*}
where $0=h_0<h_1<\ldots<h_m<t$ and we will consider the vector $d_{\bsigma}=(d_0,d_1,\ldots,d_m)$ where $d_j$ is the occurrence of $\sigma_j$  in $\bsigma$, $0 \leq j \leq m$. Clearly $d_0+d_1+\ldots+d_m=d$. If $\bsigma \in G^d$, the integer
\begin{equation}\label{sigmanorm}
	|\bsigma|= \sum_{i=0}^{d-1}q^{h_i}=\sum_{i=0}^{m}d_iq^{h_i}.
\end{equation}
will be called $norm$ of $\bsigma$.

Since we consider the ring of polynomials $\F_{q^t}[x_0,x_1,\ldots,x_{n-1}]$ actually as the quotient $\F_{q^t}[x_0,x_1,\ldots,x_{n-1}]/(x_0^{q^t}-x_0,x_1^{q^t}-x_1,\ldots,x_{n-1}^{q^t}-x_{n-1})$,\textbf{ from now on we assume } $|\bsigma|< q^t$, so that distinct polynomials will be distinct functions over $\F_{q^t}$.
By injectivity of map in \eqref{veronesemap}, it is clear that $(d,\bsigma)$-Veronese variety $\cV_{d,\bsigma}$ has as many points as $\PG(n-1,q^t)$. \\

Let $\{e_i \, \,|\,\,i=0,1,\ldots,nd-1\}$ be the canonical basis of $V(nd,\F_{q^t})=V(nd,q^t)$ and let $\Pi\cong \PG(n-1,q^t)$ be the subspace of $\PG(nd-1,q^t)$ spanned by $\{\spanset{e_i}\,\,|\,\,0 \leq i \leq n-1\}$. Let $\phi$ be the collineation of $\PG(nd-1,q^t)$ such that
 $$\la e_i \ra \mapsto \la e_{i+n} \ra,$$
where the subscripts are taken modulo $nd$. As done in \cite[Section 4]{giuzzipepe15}, for any $\langle v_i \rangle \in \Pi^{\phi^i}$, we can identify
$v_0\otimes v_1\otimes v_2\otimes \cdots \otimes v_{d-1}$ with $v_0\wedge v_1\wedge v_{2}\wedge \cdots \wedge v_{d-1}$.
Therefore,  $\cV_{d,\boldsymbol{\sigma}}$ is the  Grassman embedding of the $d$-fold normal rational scroll $$S^{\boldsymbol{\sigma}}_{n-1,n-1,\ldots,n-1}=\bigl \{\langle P^{\sigma_0 },P^{\phi\sigma_1},P^{\phi^2  \sigma_2},\ldots,P^{\phi^{d-1}  \sigma_{d-1}}\bigr \rangle\, |\,P \in \Pi \}$$ of $\PG(nd-1,q^t)$,
see \cite[Ch.8]{Harris} for a definition of normal rational scroll.

\begin{example} \label{example1}
	Let $\boldsymbol{\sigma}=\boldsymbol{1}$, the identity of the product group $G^d$, the $(d,\bsigma)$-Veronese variety $\cV_{d,\bsigma}$ is the classical Veronese variety of degree $d$ and $\cV_{d,\bsigma} \subset \PG(N-1,q^t)$ with $N=\binom{n+d-1}{d}$. In this case,  $\cV_{d,\bsigma}$ is the Grassmann embedding of $S_{n-1,n-1,\ldots,n-1}=\{\langle P,P^{\phi},P^{\phi^2},\ldots,P^{\phi^{d-1}}\rangle\, |\,P \in \Pi \}$, i.e. the Segre variety $\Sigma_{n-1,d-1}$ of $\PG(nd-1,q^t)$, see again \cite[Ch.8]{Harris}.
\end{example}

\begin{example}\label{example2}
	Let	$\sigma$ be a generator of $\Gal(\F_{q^t}|\F_q)$ and $\bsigma=(1,\sigma,\ldots,\sigma^{t-1})$, then we get the algebraic variety introduced in \cite{spread,normal,pepe11} and we will refer to it as the SLP-\textit{variety} $\cV_{t,\sigma}$. Let $\hat{\sigma}$ be  the semi-linear collineation $\phi\circ \sigma$ of $\PG(nt-1,q^t)$ of order $t$. Then the set of points fixed by $\hat{\sigma}$, $\Fix(\hat{\sigma})\subset \PG(nt-1,q^t)$, is a subgeometry  isomorphic to $\PG(nt-1,q)$  and a subspace of $\PG(nt-1,q^t)$ intersects the subgeometry in a subspace of the same dimension if and only if it is set-wise fixed by $\hat{\sigma}$ (see \cite[Section 3]{normal}). In this case
	$$S^{\bsigma}_{n-1,n-1,\ldots,n-1}=\bigl \{\langle P,P^{\hat{\sigma}},P^{\hat{\sigma}^2},\ldots,P^{\hat{\sigma}^{t-1}}\rangle\, |\,P \in \Pi \bigr\},$$ and hence its $(t-1)$-spaces  are set-wise fixed by $\hat{\sigma}$. Also, $S_{n-1,n-1,\ldots,n-1}\cap \Fix(\hat{\sigma})$ is the Desarguesian $(t-1)$-spread of $\PG(nt-1,q)=\Fix(\hat{\sigma})\subset \PG(nt-1,q^t)$. Therefore, $\cV_{t,\sigma}$ is the Grassmann embedding of the Desarguesian spread of $\PG(nt-1,q)$. In this case, in fact, $\cV_{t,\sigma}$ turns out to be a variety of the subgeometry $\PG(n^t-1,q)\subset \PG(n^t-1,q^t)$ point-wise fixed by the semi-linear collineation of order $t$ of $\PG(n^t-1,q^t)$ induced by  $\hat{\sigma}$:
\begin{equation*}
v_0\otimes v_1\otimes \cdots \otimes v_{t-1}\mapsto v_{t-1}^{\hat{\sigma}} \otimes v_0^{\hat{\sigma}}\otimes \cdots \otimes v_{t-2}^{\hat{\sigma}}.
\end{equation*}
\end{example}
\vspace{0.8cm}
\noindent By \eqref{veronesemap}, a point of $\PG(n-1,q^t)$ with homogeneous coordinates $(x_0,x_1,\ldots,x_{n-1})$ is mapped by $\nu_{d,\bsigma}$ into a point of coordinates
$$(...,\displaystyle \prod_{j=0}^{m}X_{I_j}^{\sigma_j},...)$$
where $X_{I_j}$ is a monomial of degree $d_j$ in the variables $x_0,x_1,\ldots,x_{n-1}$. Hence,  the $(d,\bsigma)$-Veronese variety $\cV_{d,\bsigma}$ is contained in a projective space of vector space dimension
\begin{equation} \label{N}
 N=N_0N_1\cdots N_m, \quad  N_j=\binom{n+d_j-1}{d_j},\quad j=0,1,\ldots,m.
\end{equation}
Let $\sigma_0,\sigma_1,\ldots \sigma_m$ distinct automorphisms in $\bsigma$ and $d_{\bsigma}$ the vector of their occurences and suppose that $d_i\sigma_i\neq d_j\sigma_j$ for all $i,j=0,1,\ldots,m$ distinct, then we get exactly $N=n^d$ distinct monomials of type $\displaystyle \prod_{j=0}^{m}X_{I_j}^{\sigma_j}$. This is not the case anymore if $d_i\sigma_i=d_j\sigma_j$ for some $i\neq j$. For example, if $q=2$, $\bsigma=(1,1,2)$, then $d_0=2,d_1=1$ and hence $d_0=d_1\sigma_1$. Then
$$(x_0,x_1)\otimes (x_0,x_1)\otimes (x_0^2,x_1^2)=(x_0^4,x_0^2x_1^2,x_0^3x_1,x_0x_1^3,x_0^3x_1,x_0x_1^3,x_0^2x_1^2,x_1^4),$$
and we get 5 distinct monomials and $\cV_{3,\bsigma}$ is in fact contained in a projective space of vector space dimension less than $N=6$.

Recall that an $r$-\textit{hypersurface} of $\PG(n-1,q^t)$ is a variety such that its points have coordinates vanish an $r$-form of $\F_{q^t}[X_0,\ldots,X_{n-1}].$ If $r=2$, an $r$-hypersurface is called  \textit{quadric}. In \cite{tallini}, it is shown a lower bound on the degree of an $r$-hypersurface  $\mathcal{D}$  of $\PG(n-1,q^t)$ after which $\mathcal{D}$ could contain all points of the projective space. More precisely,
\begin{theorem}\label{Tallini}
	\cite{tallini}	If an $r$-hypersurface $\mathcal{D}$ of $\PG(n-1,q^t)$ contains all the points of the space, then $r \geq q^t+1$.
\end{theorem}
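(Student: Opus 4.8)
The plan is to reduce the statement to the well-known normal form of polynomial functions over a finite field and then control what that reduction does to the homogeneous degree. Put $Q=q^t$ and let $f\in\F_{q^t}[X_0,\dots,X_{n-1}]$ be a nonzero form of degree $r$ whose zero locus is $\mathcal{D}$. The hypothesis that $\mathcal{D}$ contains every point of $\PG(n-1,q^t)$ means $f(v)=0$ for all $v\in\F_{q^t}^n\setminus\{0\}$, and since $r\ge 1$ also $f(0)=0$; thus $f$ is the zero function on $\F_{q^t}^n$. I want to show this forces $r\ge Q+1$, so I argue by contradiction, assuming $r\le Q$.

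The main tool I would invoke is that reduction modulo the relations $X_i^{Q}=X_i$ induces a bijection between \emph{reduced} polynomials (those in which every variable occurs to degree at most $Q-1$) and functions $\F_{q^t}^n\to\F_{q^t}$; in particular, a reduced polynomial that is identically zero as a function is the zero polynomial. (This is the standard fact that the $Q^n$ reduced monomials span the function space, seen for instance via the Lagrange indicator polynomials, which are themselves reduced, together with a dimension count.) If $r<Q$, then every exponent occurring in $f$ is at most $r<Q$, so $f$ is already reduced; being the zero function, it must then be the zero polynomial, contradicting $f\neq 0$. This disposes of all degrees below $Q$ at once.

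The only delicate case is $r=Q$, and this is where I expect the real (small) obstacle to lie: here the reduction can genuinely lower the degree, because a monomial $X_i^{Q}$ collapses to $X_i$. I would observe that among the degree-$Q$ monomials of $f$ the only ones not already reduced are the pure powers $X_i^{Q}$ (any monomial of total degree $Q$ with all exponents $\le Q-1$ is untouched, and total degree $Q$ with some exponent $\ge Q$ forces $X_i^Q$). Hence the reduced form splits as $\tilde f=L+H$, where $L=\sum_i c_i X_i$ collects the images of the pure powers and $H$ is the reduced sum of the genuine degree-$Q$ monomials. Since $Q=q^t\ge 2$, the degree-$1$ part $L$ and the degree-$Q$ part $H$ involve disjoint sets of monomials and so cannot cancel. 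As $\tilde f$ is reduced and represents the zero function, the bijection gives $\tilde f=0$, whence $L=0$ and $H=0$ separately; tracing back, every coefficient of $f$ vanishes, contradicting $f\neq 0$.

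Combining the two cases yields $r\ge Q+1=q^t+1$, which is the assertion. I would finally remark that the bound is sharp: the form $X_0^{q^t}X_1-X_0X_1^{q^t}$ has degree $q^t+1$ and vanishes at every point of $\PG(n-1,q^t)$, since for $a,b\in\F_{q^t}$ one has $a^{q^t}b-ab^{q^t}=ab-ab=0$. Thus no smaller degree can occur. The argument is entirely elementary; the single point requiring care is the non-cancellation between the degree-$1$ and degree-$Q$ parts after reduction in the boundary case $r=Q$.
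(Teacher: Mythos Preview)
Your argument is correct. The reduction modulo the ideal $(X_i^{Q}-X_i)$ together with the bijection between reduced polynomials and functions on $\F_{q^t}^n$ handles the case $r<Q$ immediately, and your treatment of the boundary case $r=Q$ is clean: since $f$ is homogeneous of degree $Q$, the only non-reduced monomials are the pure powers $X_i^{Q}$, their images land in degree $1$, and because $Q\ge 2$ there is no interaction with the surviving degree-$Q$ part. The sharpness example $X_0^{q^t}X_1-X_0X_1^{q^t}$ is also correct.

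As for the comparison: the paper does not prove this statement at all. It is quoted as a known result of Tallini and merely cited, so there is no ``paper's own proof'' to match against. Your argument is a self-contained elementary proof via the normal form for polynomial functions on $\F_{q^t}^n$; this is exactly the kind of short justification one would give in place of the citation if one wanted the paper to be self-contained. Tallini's original treatment is phrased in terms of irreducible hypersurfaces of minimal order covering the Galois space, but for the bare inequality $r\ge q^t+1$ your approach is simpler and entirely adequate.
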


Let $I$ be a multi-index of the form $I=I_0I_1\cdots I_m$, where $I_j$ is a multi-index corresponding to a monomial in $x_0,x_1,\ldots,x_{n-1}$ of degree $d_j$.  Once we have labelled the coordinates of $\PG(N-1,q^t)$ according to the multi-index $I$, we can define a natural linear map  $\psi$ that sends  the hyperplane of $\PG(N-1,q^t)$ of equation $\displaystyle\sum_{I}a_Iz_I=0$ to the $\bsigma$-\textit{hypersurface} of equation
\begin{equation*}
\sum_{I}a_I \displaystyle \prod_{j=0}^{m}X_{I_j}^{\sigma_j}=0.
\end{equation*}

Then, by Theorem \ref{Tallini}, we get the following result.

\begin{theorem}\label{hyperplane}

Let $\bsigma \in G^d$ with $d_{\bsigma}=(d_0,d_1,\ldots,d_m)$, $| \boldsymbol{\sigma}|<q^t$. The $(d,\bsigma)$-Veronese variety $\cV_{d,\bsigma}$ is not contained in any hyperplane of $\PG(N-1,q^t)$ with $N=N_0 N_1\cdots N_m$
and \[N_j=\binom{n+d_j-1}{d_j}, \quad j=0,1,\ldots,m.\]
\end{theorem}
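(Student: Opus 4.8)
The plan is to argue by contradiction, converting the statement about hyperplanes into a statement about $\bsigma$-hypersurfaces via the map $\psi$ and then applying the lower bound of Theorem~\ref{Tallini}. Suppose $\cV_{d,\bsigma}$ were contained in a hyperplane $H$ of equation $\sum_I a_I z_I = 0$ with $(a_I)_I \neq \mathbf{0}$. By \eqref{veronesemap}, a point $\spanset{(x_0,\ldots,x_{n-1})}$ of $\PG(n-1,q^t)$ is sent by $\nu_{d,\bsigma}$ to the point whose $I$-th coordinate is the value of the monomial $M_I = \prod_{j=0}^m X_{I_j}^{\sigma_j}$; hence this image lies on $H$ exactly when $\sum_I a_I M_I(x_0,\ldots,x_{n-1}) = 0$. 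Thus $\cV_{d,\bsigma}\subseteq H$ is equivalent to the $\bsigma$-hypersurface $\mathcal D = \psi(H)$ of equation $\sum_I a_I M_I = 0$ passing through \emph{every} point of $\PG(n-1,q^t)$.

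Next I would pin down the degree of $\mathcal D$. Writing $X_{I_j} = \prod_k x_k^{e_{jk}}$ with $\sum_k e_{jk} = d_j$, the automorphism $\sigma_j\colon x\mapsto x^{q^{h_j}}$ turns $M_I$ into $\prod_k x_k^{\,\sum_j e_{jk} q^{h_j}}$, a monomial of total degree $\sum_{j=0}^m d_j q^{h_j} = |\bsigma|$ by \eqref{sigmanorm}. Since every term $a_I M_I$ has this same degree, $\mathcal D$ is an $r$-hypersurface with $r = |\bsigma|$.

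The delicate step, and the one I expect to be the main obstacle, is to verify that the form $F = \sum_I a_I M_I$ does not vanish identically as a function on $\F_{q^t}^n$, so that $\mathcal D$ is a \emph{genuine} $|\bsigma|$-hypersurface to which Theorem~\ref{Tallini} may be applied; equivalently, one must show the coordinate functions $M_I$ are linearly independent. Here the standing hypothesis $|\bsigma|<q^t$ is precisely what is needed: each exponent $\sum_j e_{jk}q^{h_j}$ occurring in any $M_I$ is bounded above by $|\bsigma|<q^t$, so every $M_I$ is already in reduced form and distinct monomials $M_I$ determine distinct functions on $\F_{q^t}^n$. Consequently, provided the $N_0\cdots N_m$ products $M_I$ are pairwise distinct monomials, a nonzero coefficient vector $(a_I)_I$ forces $F\neq 0$ and no cancellation can occur. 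This is exactly the point at which monomial collisions $d_i\sigma_i = d_j\sigma_j$ (as in the example preceding the statement) must be excluded; under the genericity that makes the $N = N_0\cdots N_m$ monomials distinct, $\mathcal D$ is a true hypersurface of degree $|\bsigma|$.

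Finally I would invoke Theorem~\ref{Tallini}: an $r$-hypersurface with $r = |\bsigma|$ that contains all points of $\PG(n-1,q^t)$ forces $|\bsigma|\geq q^t+1$, contradicting the hypothesis $|\bsigma|<q^t$. Hence no such hyperplane $H$ can exist, and $\cV_{d,\bsigma}$ spans $\PG(N-1,q^t)$. Everything beyond the nonvanishing of $F$ is a direct degree count feeding into Tallini's bound, so the whole argument reduces to the linear independence of the coordinate functions $M_I$.
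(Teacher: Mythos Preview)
Your proposal is correct and follows precisely the paper's approach: the paper's proof consists of the single line ``Then, by Theorem~\ref{Tallini}, we get the following result,'' relying on the map $\psi$ set up just before to convert a containing hyperplane into a $|\bsigma|$-hypersurface covering all of $\PG(n-1,q^t)$ and then applying Tallini's bound. You have simply fleshed out this argument, including the degree computation and the nonvanishing of $F$; your caveat about monomial collisions is a genuine subtlety the paper leaves implicit (and indeed the paper's own example immediately preceding the theorem shows the span can drop when $d_i\sigma_i=d_j\sigma_j$).
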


In the following, we generalize some results proved in \cite[Section 2]{giuzzipepe13} for the SLP-variety.

\begin{theorem}\label{indep}
	Let $\Pi_0,\Pi_1,\ldots,\Pi_{d-1}$ be proper subspaces of $\PG(n-1, q^t)$ and suppose
	that $P \in \PG(n-1, q^t)$ is not contained in any of them. Then, $P^{\nu_{d,\bsigma}}$ is not contained in $\langle \Pi_0^{\nu_{d,\bsigma}},\Pi_1^{\nu_{d,\bsigma}},\ldots,\Pi_{d-1}^{\nu_{d,\bsigma}} \rangle$.
\end{theorem}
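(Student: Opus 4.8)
The plan is to produce a single linear functional on the ambient tensor space $V^{\otimes d}$ that is nonzero on $P^{\nu_{d,\bsigma}}$ yet vanishes identically on each $\Pi_i^{\nu_{d,\bsigma}}$; any such functional then vanishes on the whole span $\la \Pi_0^{\nu_{d,\bsigma}},\ldots,\Pi_{d-1}^{\nu_{d,\bsigma}}\ra$, so the non-containment is immediate. Writing $P=\la v\ra$, the image $P^{\nu_{d,\bsigma}}$ is the decomposable tensor $v^{\sigma_0}\otimes\cdots\otimes v^{\sigma_{d-1}}$, while a point of $\Pi_i^{\nu_{d,\bsigma}}$ has the form $w^{\sigma_0}\otimes\cdots\otimes w^{\sigma_{d-1}}$ with $\la w\ra\in\Pi_i$. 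The guiding observation is that a tensor product of functionals annihilates a decomposable tensor as soon as one of its tensor factors does, so it suffices to arrange, for each index $i$, that the $i$-th factor of the functional kills the $i$-th slot of every tensor arising from $\Pi_i$. The perfect matching of the $d$ subspaces with the $d$ tensor slots is exactly what makes the statement hold at the level of $d$ subspaces (and, taking each $\Pi_i$ to be a single point, yields linear independence of any $d+1$ points of $\cV_{d,\bsigma}$).

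Concretely, I would first exploit that each $\Pi_i$ is a proper subspace not containing $P$: extending a basis of $\Pi_i$ by $v$ to a basis of $V$ gives a linear form $f_i\in V^*$ with $\Pi_i\subseteq\ker f_i$ and $f_i(v)\neq 0$ (geometrically, a hyperplane through $\Pi_i$ missing $P$). To absorb the semilinear twist I would set $g_i:=f_i^{\sigma_i}$, meaning the linear form whose coordinate vector is obtained by applying $\sigma_i$ to that of $f_i$. This $g_i$ is again $\K$-linear, and the key identity is $g_i(w^{\sigma_i})=f_i(w)^{\sigma_i}$ for every $w\in V$; hence $g_i(w^{\sigma_i})=0$ precisely when $w\in\ker f_i$, while $g_i(v^{\sigma_i})=f_i(v)^{\sigma_i}\neq 0$.

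Next I would form the linear functional $F:=g_0\otimes g_1\otimes\cdots\otimes g_{d-1}\in (V^{\otimes d})^*$, acting on decomposables by $F(a_0\otimes\cdots\otimes a_{d-1})=\prod_{i}g_i(a_i)$. On $P^{\nu_{d,\bsigma}}$ it gives $\prod_i g_i(v^{\sigma_i})=\prod_i f_i(v)^{\sigma_i}\neq 0$. On an arbitrary generator $w^{\sigma_0}\otimes\cdots\otimes w^{\sigma_{d-1}}$ of $\Pi_i^{\nu_{d,\bsigma}}$ (so $\la w\ra\in\Pi_i$) the factor indexed by $i$ is $g_i(w^{\sigma_i})=f_i(w)^{\sigma_i}=0$, since $w\in\Pi_i\subseteq\ker f_i$; thus $F$ vanishes on all of $\Pi_i^{\nu_{d,\bsigma}}$ for every $i$, hence on their joint span, whereas $F(P^{\nu_{d,\bsigma}})\neq 0$, which is the assertion. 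I do not anticipate a genuine obstacle: the only delicate point is the bookkeeping of the semilinear action, namely verifying that $g_i=f_i^{\sigma_i}$ is honestly $\K$-linear and satisfies $g_i(w^{\sigma_i})=f_i(w)^{\sigma_i}$, and keeping the argument in the full tensor space $V^{\otimes d}$ where these functionals live, so that no coincidence among the monomials (a possible $d_i\sigma_i=d_j\sigma_j$) collapsing the monomial ambient space can interfere with the conclusion.
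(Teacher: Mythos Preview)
Your proposal is correct and follows essentially the same approach as the paper: both construct a decomposable linear functional $g_0\otimes\cdots\otimes g_{d-1}$ on $V^{\otimes d}$ whose $i$-th factor vanishes on $\Pi_i^{\sigma_i}$ but not on $P^{\sigma_i}$, so that the functional kills each $\Pi_i^{\nu_{d,\bsigma}}$ while being nonzero on $P^{\nu_{d,\bsigma}}$. Your version is simply more explicit about producing $g_i$ by twisting a form $f_i$ vanishing on $\Pi_i$ via $\sigma_i$, whereas the paper directly posits an $l_i^*$ vanishing on $\Pi_i^{\sigma_i}$; these are the same object.
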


\begin{proof}
	
Recall that the dual space of $V(n^d,q^t)$, denoted by $V(n^d,q^t)^*$, is spanned by the simple tensors $l_0^ *\otimes l_1^*\otimes \cdots \otimes l_{d-1}^*$, with $l_i^* \in V(n,q^t)^*$, and $l_0^*\otimes l_1^*\otimes \cdots \otimes l_{d-1}^*$ evaluated in $u_0\otimes u_1 \otimes \cdots \otimes u_{d-1}$ is $l_0^*(u_0)l_1^*(u_1)\cdots l_{d-1}^*(u_{d-1}) \in \mathbb{F}_{q^t}$.

	For every  $i \in \{0,1,\ldots,d-1\}$, take an $l_i^* \in  V(n,q^t)^*$ such that $l_i^*$ vanishes on $\Pi_i^{\sigma_i}$ and not in $P^{\sigma_i}$. Then the hyperplane defined by $ l_0^*\otimes l_1^*\otimes \cdots \otimes l_{d-1}^*$ contains the points of $\Pi_j^{\nu_{d,\bsigma}}$ $\forall\, j=0,1,\ldots, d-1$ and it does not contain the point $P^{\nu_{d,\bsigma}}$.
\end{proof}

\begin{corollary}\label{indep2}
	Any $d+1$ points of $\mathcal{V}_{d,\bsigma}$, $d \geq 2$, are in general position.
\end{corollary}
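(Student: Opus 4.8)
The plan is to derive the statement directly from Theorem~\ref{indep}, using only the elementary observation that a set of projective points fails to be linearly independent precisely when one of them lies in the linear span of the remaining ones. Concretely, I would take $d+1$ distinct points of $\cV_{d,\bsigma}$ and write them as $P_0^{\nu_{d,\bsigma}},\ldots,P_d^{\nu_{d,\bsigma}}$, where $P_0,\ldots,P_d$ are the corresponding points of $\PG(n-1,q^t)$; these are pairwise distinct because $\nu_{d,\bsigma}$ is injective. (If $n=1$ the space $\PG(0,q^t)$ has a single point and there is nothing to prove, so I may assume $n\ge 2$.)

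Next I would argue by contradiction. Suppose the $d+1$ points are linearly dependent, so that there is a nontrivial relation $\sum_{i=0}^{d}c_i\,P_i^{\nu_{d,\bsigma}}=0$ among fixed representative vectors of these points, and fix an index $k$ with $c_k\neq 0$. Then $P_k^{\nu_{d,\bsigma}}\in \la P_i^{\nu_{d,\bsigma}} : i\neq k\ra$. Now I set $\Pi_i=\{P_i\}$ for $i\neq k$: each such $\Pi_i$ is a $0$-dimensional, hence proper, subspace of $\PG(n-1,q^t)$ (this is where $n\ge 2$ is used), and since the $P_i$ are distinct, $P_k$ is contained in none of them. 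As $\Pi_i^{\nu_{d,\bsigma}}=\{P_i^{\nu_{d,\bsigma}}\}$, the span $\la \Pi_i^{\nu_{d,\bsigma}} : i\neq k\ra$ coincides with $\la P_i^{\nu_{d,\bsigma}} : i\neq k\ra$.

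At this point Theorem~\ref{indep}, applied to the $d$ proper subspaces $\{P_i\}_{i\neq k}$ and the point $P=P_k$, asserts precisely that $P_k^{\nu_{d,\bsigma}}\notin \la \Pi_i^{\nu_{d,\bsigma}} : i\neq k\ra=\la P_i^{\nu_{d,\bsigma}} : i\neq k\ra$, contradicting the displayed membership. Hence no nontrivial dependence exists and the $d+1$ points are in general position.

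I do not expect a genuine obstacle here: the entire mathematical content sits in Theorem~\ref{indep}, and the corollary is a formal consequence of it. The only two points requiring a little care are (i) recasting a linear dependence as the statement that a \emph{single} point lies in the span of the others, so that the hypothesis of Theorem~\ref{indep} is literally met, and (ii) checking that singletons qualify as \emph{proper} subspaces, which is where the harmless assumption $n\ge 2$ enters.
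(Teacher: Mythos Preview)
Your argument is correct and is exactly the paper's approach: take the $\Pi_i$'s to be the $0$-dimensional subspaces $\{P_i\}$ and invoke Theorem~\ref{indep}. You have simply unpacked the one-line proof in full detail (including the harmless caveat $n\ge 2$).
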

\begin{proof}
	It is enough to  take the $\Pi_i$'s of dimension 0.
\end{proof}

\begin{corollary}\label{dep}
	A set of $d+2$ linearly dependent points of  $\mathcal{V}_{d,\bsigma}$ is the $(d,\bsigma)$-Veronese embedding of points contained in a line of $\PG(n-1,q^t)$.
\end{corollary}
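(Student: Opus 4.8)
The plan is to argue by contradiction, exploiting the minimality of the dependence together with Theorem \ref{indep}. Write the $d+2$ points as $P_0^{\nu_{d,\bsigma}},\ldots,P_{d+1}^{\nu_{d,\bsigma}}$ with $P_i\in\PG(n-1,q^t)$. Since $\nu_{d,\bsigma}$ is injective and, by Corollary \ref{indep2}, any $d+1$ of these images are linearly independent, the $P_i$ are pairwise distinct and the assumed dependence $\sum_i c_i\,(v_i^{\sigma_0}\otimes\cdots\otimes v_i^{\sigma_{d-1}})=0$ must be a circuit: if some $c_j$ vanished, the remaining $d+1$ vectors would be dependent, contradicting Corollary \ref{indep2}. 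Hence every $c_i\neq 0$, and in particular each $P_i^{\nu_{d,\bsigma}}$ lies in the span of the other $d+1$.

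First I would dispose of the trivial case $n=2$, where $\PG(1,q^t)$ is itself a line and there is nothing to prove; so assume $n\geq 3$. Suppose, for contradiction, that $P_0,\ldots,P_{d+1}$ are not collinear. Then there are three of them, say $A,B,C$, not on a common line: indeed, if every three were collinear, fixing two points $A,B$ would force every other point onto the line $\langle A,B\rangle$, making all of them collinear. I would then apply Theorem \ref{indep} with the test point $P:=C$, with $\Pi_0:=\langle A,B\rangle$ the line through the other two, and with the remaining $d-1$ points taken as the $0$-dimensional subspaces $\Pi_1,\ldots,\Pi_{d-1}$. Because $n\geq 3$ the line $\Pi_0$ is a proper subspace, the singletons are proper as well, and $C$ lies in none of $\Pi_0,\ldots,\Pi_{d-1}$, since $C\notin\langle A,B\rangle$ by the choice of $A,B,C$ and $C$ differs from the remaining points.

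By Theorem \ref{indep} it follows that $C^{\nu_{d,\bsigma}}\notin\langle \Pi_0^{\nu_{d,\bsigma}},\ldots,\Pi_{d-1}^{\nu_{d,\bsigma}}\rangle$. On the other hand this span contains $A^{\nu_{d,\bsigma}}$, $B^{\nu_{d,\bsigma}}$ and the images of the other $d-1$ points, that is, all of $P_0^{\nu_{d,\bsigma}},\ldots,P_{d+1}^{\nu_{d,\bsigma}}$ except $C^{\nu_{d,\bsigma}}$; by the circuit property $C^{\nu_{d,\bsigma}}$ lies in that span, a contradiction. Hence the $P_i$ are collinear, and the $d+2$ dependent points are the $(d,\bsigma)$-Veronese images of $d+2$ points of a line of $\PG(n-1,q^t)$. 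The only real bookkeeping is the counting that makes the reduction work: two points feed the line $\Pi_0$, one point is the test point, and \emph{exactly} $d-1$ points remain to fill the $d-1$ singleton subspaces, so that $\Pi_0,\ldots,\Pi_{d-1}$ is precisely the list of $d$ subspaces required by Theorem \ref{indep}. This is where the hypothesis of having $d+2$ points (not more) is used, and it is the step I would check most carefully.
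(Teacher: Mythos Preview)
Your proof is correct and follows essentially the same approach as the paper: both use Corollary~\ref{indep2} to reduce to a circuit, bundle two of the preimages into a line $\Pi_0$, take the remaining $d-1$ preimages as singleton subspaces, and apply Theorem~\ref{indep} to derive a contradiction. The only cosmetic difference is that the paper fixes $\Pi_1=\langle P_0,P_1\rangle$ at the outset and shows every other $P_i$ must lie on it, whereas you first locate a non-collinear triple; you are also a bit more explicit than the paper about why the circuit property holds.
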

\begin{proof}
	The statement needs to be proved for $n>2$. Let $P_0,P_1,\ldots,P_d,P_{d+1}$ be $d+2$ points whose embedding is linearly dependent. Let $\Pi_i:=P_i$ for $i=2,\ldots,d+1$ and let $\Pi_1=\langle P_0,P_{1} \rangle$. Suppose that $P_i\notin \Pi_1$, with $i=2,\ldots,d+1$, then by Theorem \ref{indep},
$$P_i^{\nu_{d,\bsigma}} \notin \langle \Pi_1^{\nu_{d,\bsigma}},\Pi_2^{\nu_{d,\bsigma}},\ldots,\Pi_{i-1}^{\nu_{d,\bsigma}},\Pi_{i+1}^{\nu_{d,\bsigma}},\ldots,\Pi_{d+1}^{\nu_{d,\bsigma}} \rangle,$$ but by hypothesis
$$P_i^{\nu_{d,\bsigma}} \in \langle P_0^{\nu_{d,\bsigma}},P_1^{\nu_{d,\bsigma}},\ldots,P_{i-1}^{\nu_{d,\bsigma}},P_{i+1}^{\nu_{d,\bsigma}},\ldots, P_{d+1}^{\nu_{d,\bsigma}}\rangle \subset  \langle \Pi_1^{\nu_{d,\bsigma}},\Pi_2^{\nu_{d,\bsigma}},\ldots,\Pi_{i-1}^{\nu_{d,\bsigma}},\Pi_{i+1}^{\nu_{d,\bsigma}},\ldots,\Pi_{d+1}^{\nu_{d,\bsigma}} \rangle,$$
a contradiction.
\end{proof}

In order to prove the next Corollary, we need the following

\begin{lemma}\cite{lunardon84}\label{regulus}
	Let $d<|\mathbb{K}|$. Let $S$ be a set of $d+2$ subspaces of $\PG(2d-1,\mathbb{K})$ of dimension $d-1$,  pairwise disjoint, linearly dependent as points of the Grassmannian and such that any $d+1$ elements of $S$ are linearly independent. Then a line intersecting 3 elements of $S$ intersects all of them.
\end{lemma}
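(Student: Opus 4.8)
The plan is to exploit the classical fact that a set of $d-1$ dimensional subspaces in $\PG(2d-1,\K)$ which are pairwise disjoint and span the whole space form a \emph{partial spread}, and that the dependency/independence conditions translate cleanly into the language of regular spreads and reguli. First I would fix a frame: since $S$ consists of $d+2$ mutually disjoint $(d-1)$-subspaces in a $(2d-1)$-dimensional space, any two of them are complementary, and by the hypothesis that any $d+1$ of them are linearly independent, every subset of $d$ of them actually spans $\PG(2d-1,\K)$ and pairwise meets trivially. The point is that $S$ behaves like $d+2$ elements of a regular $(d-1)$-spread, and the single linear dependence among the $d+2$ Grassmann points is exactly the extra relation that places them inside a \emph{regulus}.

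Concretely, I would translate to coordinates. Pick three of the subspaces, say $S_0,S_1,S_2$, and use $S_0,S_1$ (which are complementary) to set up coordinates $\PG(2d-1,\K)=S_0\oplus S_1$, so that points are pairs $(x,y)$ with $x$ ranging over $S_0\cong\K^d$ and $y$ over $S_1\cong\K^d$. Each further subspace disjoint from both $S_0$ and $S_1$ is then the graph $\{(x,xM):x\in\K^d\}$ of an invertible linear map $M$; in particular $S_0=\{(x,0)\}$ corresponds to $M=0$ and $S_1=\{(0,y)\}$ to the ``point at infinity.'' Under the Grassmann/Plücker embedding $g_{2d,d-1}$, the Grassmann coordinate of such a graph is (up to scalar) the vector of maximal minors of the $d\times 2d$ matrix $(I\mid M)$, and linear dependence of the $d+2$ Grassmann images becomes a polynomial relation among the corresponding matrices $M_0=0,\,M_1=\infty,\,M_2,\ldots$. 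The key computational step, which I expect to be the main obstacle, is to show that this single dependence forces all the matrices $M_i$ to lie in a common pencil $\{\alpha I+\beta M_2 : (\alpha:\beta)\in\PG(1,\K)\}$; this is where the hypothesis ``any $d+1$ independent'' is used, to rule out degenerate dependencies supported on fewer than $d+2$ of the points and to guarantee that the coefficients in the dependence relation are all nonzero.

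Once the subspaces are shown to be the members of a regulus $\mathcal R=\{\langle(x,\alpha x):x\in\K^d\rangle : \alpha\in\K\cup\{\infty\}\}$ (equivalently a regular spread restricted to a $3$-dimensional subspace over a subfield, in the Segre/Bruck–Bose picture), the desired transversal property is immediate: a line $\ell$ meeting three members of a regulus is a transversal line of the regulus, and \emph{every} transversal of a regulus meets \emph{all} of its members. I would invoke this last fact in the form that through any point of a member of $\mathcal R$ there is a unique transversal line, and three incidences determine that transversal uniquely, so a line through three elements of $S$ is forced to be one of the regulus transversals and hence meets each of the $d+2$ subspaces exactly once. The condition $d<|\K|$ enters precisely to guarantee that the $d+2$ matrices $M_i$ can simultaneously sit inside a single pencil parametrized by $\PG(1,\K)$, which has $|\K|+1>d+1$ points, so there is room for $d+2$ distinct members; without it the pencil would be too small to contain all of $S$, and the regulus structure — and with it the transversal conclusion — would fail.
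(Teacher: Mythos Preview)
The paper does not prove this lemma; it is quoted from \cite{lunardon84} and used as a black box, so there is no in-paper argument to compare your attempt against.

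Judging your proposal on its own merits: the architecture is reasonable --- coordinatise using two complementary members of $S$, write the remaining ones as graphs of invertible matrices $M_i$, argue that the single Pl\"ucker dependence forces the $M_i$ into a one-parameter family so that $S$ sits inside a regulus, and then invoke the standard transversal property of a regulus. However, the step you yourself flag as ``the main obstacle'' --- that the linear dependence of the $d+2$ Pl\"ucker points forces all the $M_i$ into a common pencil --- is precisely the content of the lemma, and you have not carried it out. Everything before that step is a choice of frame; everything after it is classical. What you have written is a plan, not a proof, and the missing step is not a routine computation: the dependence relation unwinds to the statement that for every $1\le k\le d-1$ the $k\times k$ minors of the $M_i$ satisfy the \emph{same} scalar relation $\sum_{i\ge 2}\alpha_i\,(\text{minor of }M_i)=0$, and one must use the ``any $d+1$ independent'' hypothesis (which forces all $\alpha_i\neq 0$) together with this full family of minor identities to pin down the $M_i$.

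One point of detail is also off. With $S_0=\{(x,0)\}$, $S_1=\{(0,y)\}$, $S_2=\{(x,xM_2)\}$, the regulus these three determine consists of the spaces $\{(x,\lambda xM_2):x\in\K^d\}$ for $\lambda\in\K\cup\{\infty\}$; so the target is $M_i=\lambda_i M_2$, not $M_i\in\{\alpha I+\beta M_2\}$. Your pencil $\{\alpha I+\beta M_2\}$ contains $I$ but not $0$, hence does not even pass through $S_0$. Equivalently, after changing basis on the second block so that $M_2=I$, the goal is to show each $M_i$ is a scalar matrix; that is where the real work lies.
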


Since we have assumed $|\bsigma|<q^t$, Lemma \ref{regulus} always applies to $\mathcal{V}_{d,\bsigma}$.

\begin{corollary}\label{line}
A set of $d+2$ linearly dependent points of  $\mathcal{V}_{d,\bsigma}$ is  the Grassmann embedding of $(d-1)$-subspaces of the normal rational scroll $S_{1,1,\ldots,1}\subset \PG(2d-1,q^t)$ such that a line intersecting 3 of them must intersect all of them.
\end{corollary}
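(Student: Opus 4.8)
The plan is to reduce the statement to the hypotheses of Lemma~\ref{regulus} and then apply it. By Corollary~\ref{dep}, a set of $d+2$ linearly dependent points of $\cV_{d,\bsigma}$ is the image under $\nu_{d,\bsigma}$ of $d+2$ points $P_0,P_1,\ldots,P_{d+1}$ lying on a single line $\ell$ of $\PG(n-1,q^t)$. Since the whole configuration lies in the image of $\ell$, I may restrict $\nu_{d,\bsigma}$ to $\ell\cong\PG(1,q^t)$ and argue as in the case $n=2$. There the subspace spanned by $\ell^{\sigma_0},\ell^{\phi\sigma_1},\ldots,\ell^{\phi^{d-1}\sigma_{d-1}}$ is a $\PG(2d-1,q^t)$, and the restriction of $\cV_{d,\bsigma}$ to $\ell$ is precisely the Grassmann embedding of the normal rational scroll $S_{1,1,\ldots,1}$, whose generators are the $(d-1)$-dimensional subspaces
$$
\ell_P=\la P^{\sigma_0},P^{\phi\sigma_1},\ldots,P^{\phi^{d-1}\sigma_{d-1}}\ra,\qquad P\in\ell .
$$

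Next I would verify, one by one, the hypotheses of Lemma~\ref{regulus} for the set $S=\{\ell_{P_0},\ell_{P_1},\ldots,\ell_{P_{d+1}}\}$. The bound $d<|\K|=q^t$ holds because $d\le|\bsigma|=\sum_{i=0}^{d-1}q^{h_i}<q^t$ by the standing assumption $|\bsigma|<q^t$. That the $\ell_{P_i}$ form $d+2$ subspaces of dimension $d-1$ of $\PG(2d-1,q^t)$ is immediate from the description of the scroll; their linear dependence as points of the Grassmannian is exactly the hypothesis that the corresponding points of $\cV_{d,\bsigma}$ are linearly dependent, under the identification of the restriction of $\cV_{d,\bsigma}$ to $\ell$ with the Grassmann embedding of $S_{1,1,\ldots,1}$; and that every $d+1$ of the $\ell_{P_i}$ are linearly independent is Corollary~\ref{indep2}.

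The hard part will be the one remaining hypothesis, that the generators $\ell_{P_i}$ are pairwise disjoint. To establish it I would pass to the underlying vector spaces: for $n=2$ the $d$ lines $\ell^{\sigma_0},\ell^{\phi\sigma_1},\ldots,\ell^{\phi^{d-1}\sigma_{d-1}}$ sit in the mutually complementary blocks $\la e_{2i},e_{2i+1}\ra$, $i=0,\ldots,d-1$, so the corresponding $2$-dimensional vector spaces are in direct sum and $\ell_P$ meets the $i$-th block in the single point $P^{\phi^i\sigma_i}$. For $P\ne Q$ the semilinear map $\phi^i\sigma_i$ is a bijection, so $P^{\phi^i\sigma_i}\ne Q^{\phi^i\sigma_i}$ in every block; writing a putative point of $\ell_P\cap\ell_Q$ in this block decomposition forces its $i$-th component to be proportional both to $P^{\phi^i\sigma_i}$ and to $Q^{\phi^i\sigma_i}$, hence to vanish for each $i$, so that $\ell_P\cap\ell_Q=\emptyset$. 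With all the hypotheses in place, Lemma~\ref{regulus} applies and gives that a line meeting three of the $\ell_{P_i}$ meets all of them, which is exactly the assertion of the Corollary.
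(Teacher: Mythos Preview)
Your proposal is correct and follows essentially the same route as the paper: reduce to a line via Corollary~\ref{dep}, identify the restriction with the Grassmann embedding of $S_{1,1,\ldots,1}$ in $\PG(2d-1,q^t)$, and then invoke Corollary~\ref{indep2} together with Lemma~\ref{regulus}. The paper's proof is considerably terser and does not spell out the verification of the hypotheses of Lemma~\ref{regulus}; in particular, the pairwise disjointness of the generators $\ell_{P_i}$ is not checked there (it is only noted later, inside the proof of Theorem~\ref{r+2}), so your explicit block-decomposition argument is a genuine addition of detail rather than a different approach.
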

\begin{proof}
	By Corollary \ref{dep}, a set $\{P_0^{\nu_{d,\bsigma}},P_1^{\nu_{d,\bsigma}},\ldots,P_{d+1}^{\nu_{d,\bsigma}}\}$ of $d+2$ linearly dependent points of  $\mathcal{V}_{d,\bsigma}$ is such that $P_0,P_1,\ldots,P_{d+1}$ are contained in the same line, hence $\{P_0^{\nu_{d,\bsigma}},P_1^{d,\nu_{\bsigma}},\ldots,P_{d+1}^{\nu_{d,\bsigma}}\}$ is contained in a variety $\mathcal{V}_{d,\bsigma}$ of dimension 1.\\ Hence, $\{P_0^{\nu_{d,\bsigma}},P_1^{\nu_{d,\bsigma}},\ldots,P_{d+1}^{\nu_{d,\bsigma}}\}$ is the Grassmann embedding of the $(d-1)$-subspaces of the normal rational scroll $S_{1,1,\ldots,1}\subset \PG(2d-1,q^t)$. Then the result follows from Corollary \ref{indep2} and Lemma \ref{regulus} .

\end{proof}

\begin{theorem}\label{r+2} A set of $d+2$ linearly dependent points of  $\mathcal{V}_{d,\bsigma}$ is the $\bsigma$-Veronese embedding of points on a subline $\cong \PG(1,q')$, where $\mathbb{F}_{q'}$ is the largest subfield of $\mathbb{F}_{q^t}$ fixed by $\sigma_i$ in $\bsigma$.
\end{theorem}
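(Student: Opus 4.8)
The plan is to reduce to a single line and then to a short Vandermonde‑type computation in the monomial coordinates. By Corollary~\ref{dep} the $d+2$ dependent points are $\nu_{d,\bsigma}$‑images of points $P_0,\dots,P_{d+1}$ lying on a line $\ell\cong\PG(1,q^t)$, so I may take $n=2$. Writing a finite point of $\ell$ as $\langle(1,s)\rangle$, the coordinate of its image attached to choosing $b_j$ ``ones'' in the block of $\sigma_j$'s equals $s^{\sum_j b_j q^{h_j}}$, so the image is $(s^{k})_{k\in E}$ with $E=\{\sum_{j=0}^{m}b_j q^{h_j}:0\le b_j\le d_j\}$. Linear dependence of the $d+2$ images reads $\sum_i\lambda_i s_i^{k}=0$ for every $k\in E$, and by Corollary~\ref{indep2} every $\lambda_i\neq0$. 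Note that $0,1\in E$, that $q^{h_j}\in E$ for each $j$ (as $d_j\ge1$), that $\max E=|\bsigma|$, and that the target field is $\F_{q'}=\bigcap_{j}\Fix(\sigma_j)$ with $q'=q^{\gcd(h_1,\dots,h_m,t)}$.

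The key preliminary is equivariance. The structure theorem at the start of this section shows that every $\zeta_0\in\GaL(2,q^t)$ lifts to a collineation of $\PG\bigl(V^{\otimes d}\bigr)$ fixing $\cV_{d,\bsigma}$ and inducing $P\mapsto P^{\zeta_0}$ on $\ell$; since collineations preserve linear (in)dependence and $\PGaL(2,q^t)$ sends $\F_{q'}$‑sublines to $\F_{q'}$‑sublines, both hypothesis and conclusion are invariant under this action. I would therefore normalise three of the points to $\langle(1,0)\rangle$, $\langle(1,1)\rangle$, $\langle(0,1)\rangle$, whose images are $e_0$ (coordinate $k=0$), the all‑ones vector, and $e_{|\bsigma|}$ (coordinate $k=\max E$). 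The $\F_{q'}$‑subline through these three points is precisely $\F_{q'}\cup\{\infty\}$, so it suffices to prove that the remaining parameters $s_3,\dots,s_{d+1}$ all lie in $\F_{q'}$, i.e. that $s_i^{\sigma_j}=s_i$ for each such $i$ and each $j\in\{1,\dots,m\}$.

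Since $\langle(1,0)\rangle$ contributes only to the coordinate $k=0$ and $\langle(0,1)\rangle$ only to $k=\max E$, the coefficients $\lambda_0,\lambda_1$ drop out of all remaining equations, leaving $\sum_{i=2}^{d+1}\lambda_i s_i^{k}=0$ (all $\lambda_i\neq0$) for the middle exponents $k\in E\setminus\{0,\max E\}$. Fix $j$ and a base exponent $e=c_0+\sum_{l\ge1}c_l q^{h_l}$ with $c_0\le d_0-1$, $c_j\le d_j-1$, $c_l\le d_l$ otherwise; then $e+1,e+q^{h_j}\in E$ and both are non‑extreme, and subtracting the $k=e+1$ equation from the $k=e+q^{h_j}$ equation (using $s^{e+1}=s^{e}s$, $s^{e+q^{h_j}}=s^{e}s^{\sigma_j}$) gives
\[
\sum_{i=2}^{d+1}\mu_i\,s_i^{\,e}=0,\qquad \mu_i:=\lambda_i\bigl(s_i^{\sigma_j}-s_i\bigr),\quad \mu_2=0.
\]
As $e$ runs over these admissible bases it runs over exactly the exponent set of the lower embedding $\nu_{d-2,\bsigma'}$, where $\bsigma'$ is obtained from $\bsigma$ by deleting one $\sigma_0$ and one $\sigma_j$; hence $\sum_{i=3}^{d+1}\mu_i\,\nu_{d-2,\bsigma'}(s_i)=0$. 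These are $d-1=(d-2)+1$ distinct points of $\cV_{d-2,\bsigma'}$, so by Corollary~\ref{indep2} (equivalently Theorem~\ref{indep}) they are linearly independent, forcing every $\mu_i=0$ and thus $s_i^{\sigma_j}=s_i$. Running this over all $j$ places all $s_i$ in $\F_{q'}$; the degree‑collapse $s^{\sigma_j}=s\Rightarrow s^{\sum_j b_j q^{h_j}}=s^{\sum_j b_j}$ then exhibits these images as a degree‑$d$ rational normal curve spanning only a $\PG(d,q^t)$, confirming the asserted identification. The case $d=2$ is the base: after the same normalisation the dependence of $e_0,e_{|\bsigma|}$, the all‑ones vector and $(1,s,s^{\sigma},s^{1+\sigma})$ collapses to the proportionality $(1:1)=(s:s^{\sigma})$, i.e. $s\in\Fix(\sigma)$.

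The main obstacle, and the step I would dwell on, is recognising that the difference of the two shifted equations is itself a relation for the lower $(d-2,\bsigma')$‑Veronese, which is what unlocks Corollary~\ref{indep2}; everything after that is immediate. Two bookkeeping points must be checked with care. First, one must verify that the admissible base exponents $e$ really do sweep out all of the $\nu_{d-2,\bsigma'}$ exponent set and that $e+1,e+q^{h_j}$ are always strictly between $0$ and $\max E$ (so that $\lambda_0,\lambda_1$ are genuinely absent); both reduce to the constraints $c_0\le d_0-1$, $c_j\le d_j-1$. Second, one should note that coincidences among the exponents of $E$ (as for $q=2$) do not damage the argument, since each subtracted relation is a valid functional identity regardless of such collisions, and that when $d_j=1$ the tuple $\bsigma'$ simply loses an automorphism — harmless here, because only the \emph{independence} furnished by Corollary~\ref{indep2}, not the fixed field of $\bsigma'$, is used.
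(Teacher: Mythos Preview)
Your argument is correct and takes a genuinely different route from the paper's proof. Both proofs begin by invoking Corollary~\ref{dep} to reduce to $n=2$, but from there they diverge completely. The paper works geometrically: it interprets the $d+2$ points as the Grassmann images of $(d-1)$-spaces $S_i$ of the scroll $S_{1,\ldots,1}\subset\PG(2d-1,q^t)$, then appeals to Lunardon's regulus lemma (Lemma~\ref{regulus}) to build a transversal line $\ell$ meeting every $S_i$, and finally analyses the projections of $\ell$ onto the factors $\Pi^{\phi^j}$ to produce, for each $\sigma_j$, a semilinear collineation of $\PG(1,q^t)$ with companion automorphism $\sigma_j$ that fixes all $d+2$ preimages, forcing them into a subline over $\Fix(\sigma_j)$.

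Your approach is purely algebraic and entirely self-contained within Section~2: you write the images in monomial coordinates $(s^k)_{k\in E}$, use $3$-transitivity of $\PGaL(2,q^t)$ (lifted via Theorem~2.1) to normalise three preimages to $0,1,\infty$, and then observe that for each $j$ the pair of equations at exponents $e+1$ and $e+q^{h_j}$ subtracts to a linear relation among the images of the remaining $d-1$ finite points under the \emph{lower} embedding $\nu_{d-2,\bsigma'}$. Since these are $(d-2)+1$ distinct points, Corollary~\ref{indep2} forces every coefficient to vanish, giving $s_i^{\sigma_j}=s_i$. This bypasses Lemma~\ref{regulus} and the scroll machinery entirely; the whole weight rests on Corollary~\ref{indep2} applied one level down. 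What the paper's approach buys is a more conceptual picture (the transversal line is a concrete geometric witness), and it explains the appearance of $\Fix(\sigma_j)$ as the fixed field of a visible semilinear map; what yours buys is elementarity, a shorter dependency chain, and a proof that makes the inductive structure in $d$ explicit. Two small remarks: Corollary~\ref{indep2} is stated for $d\ge 2$, so when $d-2\in\{0,1\}$ you should note (as you implicitly do in the $d=2$ base case) that the required independence of $1$ or $2$ distinct points is trivial; and the hypothesis $|\bsigma'|<q^t$ needed to invoke the corollary is automatic since $|\bsigma'|=|\bsigma|-1-q^{h_j}$.
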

\begin{proof}
	Let $ \langle u_i\otimes u_i^{\sigma_1}\otimes \cdots \otimes u_i^{\sigma_{d-1}} \rangle$, $i=0,1,\ldots,d+1$ be $d+2$ linearly dependent points of  $\mathcal{V}_{d,\bsigma}$, and by Corollary \ref{dep}, we can assume $\mathcal{V}_{d,\bsigma}$ to be of dimension 1. Then, embed $\PG(1,q^t)$ as the subspace of $\PG(2d-1,q^t)$ spanned by $ \langle e_0 \rangle , \langle e_1 \rangle $, say $\Pi$, and hence we can write
	
	$$u_i\otimes u_i^{\sigma_1}\otimes \cdots \otimes u_i^{\sigma_{d-1}}=u_i\wedge u_i^{\phi\sigma_1}\wedge   \cdots \wedge u_i^{\phi^{d-1}\sigma_{d-1}}.$$
	
	We stress out that $\phi^j$ and $\sigma_j$ commute and that the vectors $u_i$'s are pairwise not proportional. Let $S_i:=\langle u_i,u_i^{\phi \sigma_1},\ldots,u_i^{\phi^{d-1} \sigma_{d-1}} \rangle$, for all $i=0,1,\ldots,d+1$, so we observe that $S_i\cap S_j= \emptyset \,\, \forall \,\, i \neq j$. Then take a point $P \in S_0$ such that $P \notin \langle  \Pi^{\phi^h}, h \neq j \rangle$ for any fixed $j \in \{0,1,\ldots,d-1\}$. The subspace $\langle P,S_1 \rangle$ intersects $S_2$ in a point, say $R$. Let $\ell$ be the line spanned by $P$ and $R$. Then $\ell$ has non empty intersection with $S_1$ as well. Hence, by Corollary \ref{line}, $\ell$ has non empty intersection with all the $S_i$'s. By the choice of $P$, the line $\ell$ is not contained in  any $\langle  \Pi^{\phi^h}, h \neq j \rangle$ for a fixed $j \in \{0,1,\ldots,d-1\}$. If $\ell$ intersects $\langle  \Pi^{\phi^h}, h \neq j \rangle$ for some $j \in \{0,1,\ldots,d-1\}$, then it would be projected to a unique point of $\Pi^{\phi^j}$ from $\langle  \Pi^{\phi^h}, h \neq j \rangle$. Since $u_i\neq u_h$ $\forall i\neq h$, then $u_i^{\sigma_j}\neq u_h^{\sigma_j}$ $\forall i\neq h$ and $\ell$ can be projected on a unique point only if $\ell \cap S_i$ is in $\langle  \Pi^{\phi^h}, h \neq j \rangle$ for all the $S_i's$ except one, a contradiction. Indeed, the point $\ell \cap S_i= \langle \lambda_0u_i+\lambda_1u_i^{\phi\sigma_1}+\ldots+\lambda_{d-1}u_i^{\phi^{d-1}\sigma_{d-1}} \rangle$ and the projection of $\ell\cap S_i$ over $\Pi^{\phi^j}$ is the point $\langle u_i^{\phi^j\sigma_j} \rangle$, so $h_j$ cannot be zero. Therefore, $\ell \cap\langle  \Pi^{\phi^h}, h \neq j \rangle=\emptyset $ for any fixed $j \in \{0,1,\ldots,r-1\}$.
 Hence the projection of $\ell$ on a $\Pi^{\phi^j}$ is an isomorphism of lines, say $p_j$ and $(\ell \cap S_i)^{p_j}= \langle u_i^{\phi^j\sigma_j}\rangle $. By $(\ell \cap S_i)^{p_j\phi^{-j}}=(\ell \cap S_i)^{p_0\sigma_j}$ we get that $(\ell \cap S_i)^{p_0}$ is fixed by the semi-linear collineation $\sigma_j\phi^jp^{-1}_jp_0$. If a semi-linear collineation of $\Pi \cong\PG(1,q^t)$ fixes at least 3 points, then it fixes a subline $\cong \PG(1,q')$, where $\mathbb{F}_{q'}$ is the subfield of $\mathbb{F}_{q^t}$ fixed by $\sigma_j$. This is true for all $\sigma_j$ in $\bsigma$.
\end{proof}

Finally, since the algebraic variety $\Sigma_{(n-1)^d}$ has dimension $d(n-1)$ and degree $\binom{d(n-1)}{n-1,n-1,\cdots,n-1}=\frac{(d(n-1))!}{d(n-1)!}$, a general subspace of $\PG(N-1,q^t)$ of codimension $d(n-1)$ contains at most $\frac{(d(n-1))!}{d(n-1)!}$ points of $\mathcal{V}_{d,\bsigma}$.\\
Moreover, the Segre variety is smooth and hence the tangent space $T_P(\Sigma_{(n-1)^d})$ to $\Sigma_{(n-1)^d}$ at a point $P= \langle v_0\otimes v_1\otimes \cdots \otimes v_{d-1} \rangle$ has dimension $d(n-1)$ and it spanned by the $d$ subspaces
$$\langle \langle v_0 \otimes v_1 \otimes \cdots v_{i-1} \otimes u_{i} \otimes v_{i+1} \otimes \cdots \otimes v_{d-1} \rangle \,|\,
\langle u_i\rangle \in \PG(n-1,q^t)\rangle \cong \PG(n-1,q^t).$$
These subspaces pairwise intersect only in $P$ and they are the maximal subspaces contained in $\Sigma_{(n-1)^d}$ through the point $P$, and  $\Sigma_{(n-1)^d}$ does not share with $\cV_{d,\bsigma}$ the property proved in Corollary \ref{indep2}. We have, in fact, $T_P(\Sigma_{(n-1)^d})\cap \cV_{d,\bsigma}=P$ for each $P \in \cV_{d,\bsigma}$.

\section{The code $\mathcal{C}_{d,\bsigma}$}
As we have seen in  Example \ref{example2}, the SLP-variety turns out to be a variety of a subgeometry of order $q$, even though the array $\bsigma$ is defined on a finite field of order $q^t$, hence among all the possible choice of $\bsigma$ and $n$, for $q$ 'big enough'  $\cV_{t,\sigma}$ is the variety with the most 'dense' set of points of a projective space with the property that any $d+1$ points are independent. In this case, since $d=t$ and, as proved in \cite{giuzzipepe13}, $t+2$ linearly dependent points are contained in a normal rational curve of degree $t$ of $\PG(t,q)$, $q>t$.\\
For the classical Veronese variety of degree $d$, hence for $\bsigma=\textbf{1}$, Theorem \ref{r+2} implies that $d+2$ linearly dependent points are contained in the Veronese embedding of degree $d$ of a line, hence in a normal rational curve of degree $d$ of $\PG(d,q^t)$.\\
Finally, for a general $(d,\bsigma)$-Veronese variety, if $d+2>q'+1$, with $q'$ defined as in Corollary \ref{r+2}, every $d+2$ points of   $\mathcal{V}_{d,\bsigma}$ are linearly independent, hence, for 'small' $q'$, it provides a dense set of points with that property. More precisely, we get $\frac{q^{nt}-1}{q^t-1}$ points in $\PG(N-1,q^t)$ such that any $d+2$ of them are in general position.  Sets of points with properties of this sort are studied for their connections with linear codes.\\

\noindent If $H$ is the matrix whose columns are the coordinates vectors of the points of the variety $\cV_{d,\bsigma}$, we get a code $\cC_{d,\bsigma}$ and we may study the minimum distance of it and characterize the codewords of minimum weight (for an overview on this topic, see, e.g., \cite{berlekamp}).

\begin{definition}\label{veronesiancode}
Let $\cV_{d,\bsigma}$  be a $(d,\bsigma)$-Veronese variety and denote by $\mathcal{C}_{d,\bsigma}$
the code whose parity check matrix $H$ of order $N\times (\frac{q^{nt}-1}{q^t-1})$ has  columns that are the coordinate vectors of the points
of the variety $\cV_{d,\bsigma}$.
\end{definition}

Clearly,  the order of the columns of $H$ is arbitrary, so that Definition \ref{veronesiancode} makes sense only up to code equivalence, as a permutation of the columns that is not usually an automorphism of the code, see \cite[Remark 3.3]{giuzzipepe13}.

\begin{definition}\label{support}
The support of a codeword $ \textbf{w} \in \cC_{d,\bsigma}$ is the set of the points of the variety $\cV_{d,\bsigma}$ corresponding to the non-zero positions of $\textbf{w}$.
\end{definition}

As showed in \cite[Theorem 3.5]{giuzzipepe13}, the following result holds

\begin{theorem}
Let $\bsigma \in G^d$ with $d_{\bsigma}=(d_0,d_1,\ldots,d_m)$, $|\boldsymbol{\sigma}|<q^t$ and $\F_{q'}$ be the largest  subfield fixed by $\sigma_i$'s. If $d<q'$ then the code $\cC_{d,\bsigma}$ has length $r=\frac{q^{nt}-1}{q^t-1}$ and parameters $[r,r-N,d+2]$.
\end{theorem}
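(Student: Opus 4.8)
The plan is to establish the three parameters separately, treating the length and the dimension as direct consequences of earlier results and concentrating the real work on the minimum distance.

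First I would compute the length. Since the Segre map, and hence $\nu_{d,\bsigma}$, is injective, the variety $\cV_{d,\bsigma}$ has exactly as many points as $\PG(n-1,q^t)$, namely $\frac{q^{nt}-1}{q^t-1}$; these points are by definition the columns of the parity check matrix $H$, so $r=\frac{q^{nt}-1}{q^t-1}$. For the dimension, recall that $H$ has $N$ rows and that $\cC_{d,\bsigma}$ is the orthogonal complement of the row space of $H$, so its dimension equals $r-\rk(H)$. By Theorem \ref{hyperplane} the variety $\cV_{d,\bsigma}$ is not contained in any hyperplane of $\PG(N-1,q^t)$, i.e. the coordinate vectors of its points span $\F_{q^t}^N$; hence $\rk(H)=N$ and the dimension is $r-N$.

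For the minimum distance I would use the standard criterion recalled in the Introduction, that the minimum weight of $\cC_{d,\bsigma}$ is at least $w$ exactly when any $w-1$ columns of $H$ are linearly independent. The lower bound $\delta\geq d+2$ is then immediate from Corollary \ref{indep2}: any $d+1$ points of $\cV_{d,\bsigma}$ are in general position, so any $d+1$ columns of $H$ are linearly independent. It remains to produce a set of $d+2$ columns of $H$ that are linearly dependent while, by the previous step, no $d+1$ of them are; such a set yields a codeword of weight exactly $d+2$, whence $\delta\leq d+2$. To exhibit it I would fix a subline $\cong\PG(1,q')$ of some line of $\PG(n-1,q^t)$, where $\F_{q'}$ is the largest subfield fixed by all the $\sigma_i$ in $\bsigma$. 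Choosing a frame in which its points are $\langle(\alpha,\beta,0,\ldots,0)\rangle$ with $(\alpha,\beta)\in\F_{q'}^2$, every $\sigma_i$ acts trivially on these coordinates, so on the subline $\nu_{d,\bsigma}$ reduces to the ordinary degree-$d$ Veronese map $\langle v\rangle\mapsto\langle v\otimes\cdots\otimes v\rangle$; its image lies in the span of the $d+1$ monomials of degree $d$ in two variables, that is, inside a subspace $\cong\PG(d,q^t)$. Consequently any $d+2$ of these image points are linearly dependent. The hypothesis $d<q'$ guarantees that the subline carries $q'+1\geq d+2$ points, so $d+2$ distinct such points exist, and their images give the required dependent columns. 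Combining the two bounds yields $\delta=d+2$ and the parameters $[r,r-N,d+2]$.

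I expect the main obstacle to be the upper bound on $\delta$: one must verify that the embedding genuinely degenerates, on a subline fixed by all the $\sigma_i$, to a curve spanning only a $\PG(d,q^t)$ — this is the geometric content already behind Corollary \ref{line} and Theorem \ref{r+2} — and one must see that the hypothesis $d<q'$ is precisely what makes enough points available on that subline to force a dependency among $d+2$ of them.
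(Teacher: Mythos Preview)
Your proposal is correct and follows essentially the same route as the paper: length from the injectivity of $\nu_{d,\bsigma}$, dimension from Theorem~\ref{hyperplane}, the lower bound $\delta\geq d+2$ from Corollary~\ref{indep2}, and the upper bound by restricting to a canonical subline $\PG(1,q')$ whose image is a normal rational curve in a $\PG(d,\cdot)$, using $d<q'$ to guarantee $d+2$ distinct points there. Your explicit choice of coordinates to see why the $\sigma_i$ act trivially on the subline is a welcome clarification of a step the paper leaves implicit.
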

\begin{proof}
Since $|\cV_{d,\bsigma}|=|\PG (n - 1, q^t )|$ the code $\cC_{d,\bsigma}$ has lenght $\frac{q^{nt}-1}{q^t-1}$ .
Moreover, since  $\cV_{d,\bsigma}$ is not contained in any hyperplane of $\PG(N-1,q^t)$,  the vector space dimension of the $N \times r$  matrix
$H$ is maximal and so the dimension of the code is $r - N$.  By Corollary \ref{indep2} guarantees
that any $ d + 1$ columns of $H$ are linearly independent; thus, by \cite[Theorem 10, p. 33]{MC}, the
minimum distance of $\cC_{d,\bsigma}$ is at least $d + 2$.
The image under $\nu_{d,\bsigma}$ of the canonical subline $\PG (1, q')$ of $\PG (n - 1, q^t)$ determines a
submatrix $H'$ of $H$ with many repeated rows; indeed, the points represented in $H$ constitute
a normal rational curve $\PG(d,q')$ and  it follows that any $d + 2$ such points are necessarily dependent. Hence, the minimum distance
is exactly $d+ 2$.
\end{proof}

Now, as  in \cite[Theorem 3.7]{giuzzipepe13}, by the characterizations of sets of $d+2$ points of $\cV_{d,\bsigma}$ which are linearly dependent yields a characterization of the minimum weight codewords of the associated code. More precisely,

\begin{theorem}
A codeword $\textnormal{\textbf{w}} \in \cC_{d,\bsigma}$ has minimum weight if and only if its support consists of $d+2$ points contained in the image of a subline $\PG(1,q')$, $d<q'$, where $\F_{q'}$ is the largest subfield of $\F_{q^t}$ fixed by $\sigma_i$ for all $\sigma_i$ in $\bsigma$.
\end{theorem}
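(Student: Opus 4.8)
The plan is to translate the geometric statement of Theorem~\ref{r+2} into the language of codewords via the standard dictionary between linear dependencies among the columns of $H$ and codewords of $\cC_{d,\bsigma}$. Recall that a row vector $\mathbf{w}$ lies in $\cC_{d,\bsigma}$ precisely when $\sum_{j} w_j Q_j = \mathbf{0}$, where the $Q_j$ are the coordinate vectors of the points of $\cV_{d,\bsigma}$ forming the columns of $H$; the support of $\mathbf{w}$ is then $\{Q_j : w_j \neq 0\}$ and its weight is the cardinality of this set. By the preceding theorem the minimum distance already equals $d+2$, so the task reduces to identifying exactly which codewords attain weight $d+2$.

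For the forward implication I would take a codeword $\mathbf{w}$ of weight $d+2$ with support $\{P_0^{\nu_{d,\bsigma}}, \ldots, P_{d+1}^{\nu_{d,\bsigma}}\}$. Restricting the relation $\sum_j w_j Q_j = \mathbf{0}$ to the support exhibits these $d+2$ points of $\cV_{d,\bsigma}$ as linearly dependent. Theorem~\ref{r+2} then applies verbatim, since its hypothesis is precisely a set of $d+2$ linearly dependent points of $\cV_{d,\bsigma}$, and yields that $P_0, \ldots, P_{d+1}$ lie on a subline $\cong \PG(1,q')$ with $\F_{q'}$ the largest subfield of $\F_{q^t}$ fixed by every $\sigma_i$ occurring in $\bsigma$. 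Hence the support lies in the $\nu_{d,\bsigma}$-image of such a subline.

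For the converse I would start from $d+2$ distinct points of $\cV_{d,\bsigma}$ on the image of a subline $\PG(1,q')$. Since $d < q'$, the subline carries $q'+1 \geq d+2$ points, so the configuration exists; moreover, as noted in the proof of the minimum-distance theorem, the $\nu_{d,\bsigma}$-image of $\PG(1,q')$ is a normal rational curve of degree $d$ spanning a $\PG(d,q')$. As $d+2$ points in a projective space of dimension $d$, they are linearly dependent, producing a nonzero relation $\sum_{j=0}^{d+1}\lambda_j Q_j = \mathbf{0}$ and hence a codeword. The decisive point is that by Corollary~\ref{indep2} any $d+1$ of these points are linearly independent, so no $\lambda_j$ vanishes; the codeword therefore has weight exactly $d+2$, matching the minimum distance.

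No single step is genuinely hard once Theorem~\ref{r+2} is in hand, so the argument is essentially a bookkeeping translation. The only point demanding care — and the step I expect to be the crux — is the two-sided use of Corollary~\ref{indep2}: it guarantees both that a weight-$(d+2)$ codeword cannot conceal a smaller dependency and that a geometric configuration of $d+2$ subline points yields a codeword of weight \emph{exactly} (not merely at most) $d+2$. This general-position property is precisely what upgrades the one-way inclusion ``minimum-weight supports lie on subline images'' to the full biconditional asserted in the statement.
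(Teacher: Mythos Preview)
Your proposal is correct and follows exactly the approach the paper intends: the paper does not give a self-contained proof here but simply records that the characterization of $d+2$ linearly dependent points in Theorem~\ref{r+2}, combined with the standard dictionary between codewords and linear dependencies among columns of $H$, yields the result ``as in \cite[Theorem~3.7]{giuzzipepe13}''. Your write-up supplies precisely those details, including the use of Corollary~\ref{indep2} to guarantee that the codeword produced in the converse direction has weight exactly $d+2$.
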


Suppose $d \geq q'$ where $\mathbb{F}_{q'}$ is the largest subfield of $\mathbb{F}_{q^t}$ fixed by $\sigma_i$, for all $\sigma_i$ in $\bsigma$. By Theorem \ref{r+2}, the code $\cC_{d,\bsigma}$ is a linear code with minimum distance $ d+3 \leq \delta \leq N+1$. If the Singleton bound is reached, then it is an MDS code.
Let $N$ be  as in \eqref{N} with $\displaystyle\sum_{i=0}^{m}d_i=d$.
If $n=2$, then $$N=\displaystyle\prod_{i=0}^{m}(d_i+1)$$
and the minimum is reached for $m=1,d_0=d-1,d_1=1$, so $N=2d$.\\
If $\sigma$ is such that $\Fix(\sigma)\cap \mathbb{F}_{q^t}=\mathbb{F}_p$, where $p$ is the characteristic of the field, since we should have $d\geq p$, the smallest possible $d=p$ and in this case
\begin{equation}\label{array}
\bsigma=(\underbrace{1,1,\ldots,1}_{p-1\, \textnormal{times}},\sigma)
\end{equation}
getting that $\mathcal{V}_{d,\bsigma}$ is a set of $q^t+1$ points in $\PG(2p-1,q^t)$ such that any $p+2$ of them are in general position. So the code $\cC_{d,\bsigma}$ is a $[q^t+1,q^t-2p+1]$-linear code with minimum distance at least $p+3$ and the Singleton bound $2p+1$.  Now, if $\sigma: x \mapsto x^p$, then $\cV_{p,\bsigma}$  is the normal rational curve of $\PG(2p-1,q^t)$;  hence $\cC_{p,\bsigma}$ is an MDS code.\\
Furthermore for $p\in \{2,3\}$, the following cases can also occur
\begin{enumerate}
\item [-] for $p=2$, $\sigma: x \mapsto x^{2^h}$, $1 < h < et$, $\cV_{2,\bsigma}$ is either the Segre arc or the normal rational curve (for $h=et-1$),
hence  $\cC_{2,\bsigma}$ is an MDS code.
\item[-] for $p=3$, $\sigma: x \mapsto x^{3^h}$, $1 < h < et$, $\cV_{3,\bsigma}$ is a $(3^{et}+1)$-track of $\PG(5,3^{et})$; hence   $\cC_{3,\bsigma}$ is a so called \textit{almost} $MDS$ code, \cite{deboer}, see next Theorem \ref{p=3}.
\end{enumerate}
 Clearly, as $p$ gets larger, the minimum distance gets smaller than the Singleton bound.
Before showing the announced result,  we recall the following theorem due to Thas \cite{thas} and of which Kaneta and  Maruta gives an elementary proof,

\begin{theorem}\label{kmt}
\cite[Theorem 1]{KanetaMaruta} In $\PG(r,q)$, $r\geq 2$ and $q$ odd, every $k$-arc with
$$q -\sqrt{q}/4 + r-1/4 \leq k \leq q+1$$ is contained in one and only one normal rational curve of the space. In particular, if $q > (4r-5)^2$, then every $(q+1)$-arc is a normal rational curve.
\end{theorem}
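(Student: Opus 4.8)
The plan is to argue by induction on the dimension $r$, reducing the general statement to the planar case via projection. For the base case $r=2$ I would invoke Segre's two classical results in $\PG(2,q)$, $q$ odd: first, that every $(q+1)$-arc is a conic, i.e.\ a normal rational curve of the plane; and second, his incompleteness bound, asserting that a \emph{complete} arc other than a conic has at most $q-\sqrt q+1$ points. Combining these, any $k$-arc with $k$ exceeding this threshold is incomplete, hence can be repeatedly extended until it fills out a $(q+1)$-arc, which is a conic; so every sufficiently large plane arc lies on a unique conic. Since the threshold $q-\sqrt q/4+2-1/4$ in the statement comfortably exceeds $q-\sqrt q+1$, the base case is covered.

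For the inductive step, fix a point $P$ of the $k$-arc $K\subset\PG(r,q)$ and consider the projection $\pi_P$ from $P$ onto a hyperplane. Because $K$ is an arc, $\pi_P(K\setminus\{P\})$ is a $(k-1)$-arc in $\PG(r-1,q)$, and the bound is rigged precisely so that $k-1\geq q-\sqrt q/4+(r-1)-1/4$ whenever $k\geq q-\sqrt q/4+r-1/4$. Thus, by the inductive hypothesis, $\pi_P(K\setminus\{P\})$ lies on a unique normal rational curve $C_P$ of $\PG(r-1,q)$. The heart of the argument is the \emph{lifting} step: exploiting that a normal rational curve of $\PG(r,q)$ projects from any of its points to a normal rational curve of $\PG(r-1,q)$, together with Segre's lemma of tangents to control the tangent hyperplanes of $K$, one shows that the compatible projected curves $C_P$, obtained as $P$ ranges over $K$, can only arise from a single normal rational curve passing through all of $K$.

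Uniqueness is then immediate from the rigidity of normal rational curves: any $r+3$ points of such a curve in general position determine it, and the arc has at least $r+3$ points once $q$ is large. Finally, the displayed ``in particular'' follows by substituting $k=q+1$: the inequality $q+1\geq q-\sqrt q/4+r-1/4$ is equivalent to $\sqrt q\geq 4r-5$, i.e.\ to $q\geq(4r-5)^2$, and since a normal rational curve of $\PG(r,q)$ has exactly $q+1$ points, a $(q+1)$-arc contained in one must coincide with it. \textbf{The main obstacle} I anticipate is twofold: making the lifting step rigorous, by controlling via the lemma of tangents how the tangent envelopes of $K$ and of its projections fit together, and, above all, pinning down the exact coefficient $\sqrt q/4$ in the bound. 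The delicate estimate is bounding the degree of the algebraic (dual/tangent) curve associated with the arc and showing it forces extendability; tracking the constants carefully enough to land on $q-\sqrt q/4+r-1/4$ rather than a cruder threshold is exactly what makes the elementary Kaneta--Maruta treatment genuinely technical.
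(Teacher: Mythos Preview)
The paper does not actually prove this theorem: it is quoted verbatim from Kaneta--Maruta (and originally Thas) as an external result, with only the citation \cite[Theorem 1]{KanetaMaruta} in lieu of a proof. There is therefore nothing in the paper to compare your sketch against; the authors only \emph{use} the statement, in the proof of Theorem~\ref{p=3}, to rule out that $\cV_{3,\bsigma}$ is a $(q^t+1)$-arc of $\PG(5,q^t)$.

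As a side remark on your sketch itself: the inductive scheme via projection from a point is indeed the backbone of the Kaneta--Maruta argument, and your derivation of the ``in particular'' clause is correct. What you flag as the main obstacle---making the lifting rigorous and getting the exact constant $\sqrt{q}/4$---is precisely where the work lies in \cite{KanetaMaruta}; your base case, however, is slightly mis-stated, since Segre's bound $q-\sqrt{q}+1$ is for $q$ an odd square (the general odd-$q$ bound is $q-\sqrt{q}/4+7/4$, which is exactly the $r=2$ instance of the hypothesis), so you would want to invoke the sharper general planar estimate rather than the square-$q$ one.
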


\begin{theorem}\label{p=3}
Let $q=3^e$ and $\sigma: x \in \F_{q^t}  \mapsto x^{3^h} \in \F_{q^t}$,  $1<h<et$, $\gcd(h,et)=1$ with $et>4$. Then $\cV_{3,\bsigma}$ with $\bsigma=(1,1,\sigma)$ is a  $(3^{et}+1)$-track of $\PG(5,3^{et})$ and $\cC_{3,\bsigma}$
is an  almost MDS.
\end{theorem}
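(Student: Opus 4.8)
The plan is to establish the two claims in turn: that $\cV_{3,\bsigma}$ is a $(3^{et}+1)$-track of $\PG(5,3^{et})$, and then that the minimum distance of $\cC_{3,\bsigma}$ equals $N=6$ (almost MDS) rather than $N+1$ (MDS). First I would record the parameters: here $n=2$, $d=3$ and $d_{\bsigma}=(2,1)$, so by \eqref{N} we get $N=\binom{3}{2}\binom{2}{1}=6$, hence $\cV_{3,\bsigma}\subset\PG(5,3^{et})$ and $|\cV_{3,\bsigma}|=|\PG(1,3^{et})|=3^{et}+1$. Since $\gcd(h,et)=1$, the automorphism $\sigma\colon x\mapsto x^{3^h}$ fixes exactly $\F_3$, so the field $\F_{q'}$ of Theorem \ref{r+2} is $\F_3$.

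For the track I would argue as follows. By Theorem \ref{r+2} a set of $d+2=5$ linearly dependent points of $\cV_{3,\bsigma}$ would be the $\bsigma$-Veronese image of $5$ distinct points lying on a subline $\cong\PG(1,3)$; but $\PG(1,3)$ has only $q'+1=4$ points, a contradiction. Thus every $5$ points of $\cV_{3,\bsigma}$ are in general position, which is exactly the definition of a $(3^{et}+1)$-track of $\PG(5,3^{et})$. Translating to the code, any $5$ columns of $H$ are independent, so by \cite[Theorem 10, p. 33]{MC} the minimum distance $\delta$ satisfies $\delta\ge 6=N$, while the Singleton bound gives $\delta\le N+1=7$.

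It therefore remains to rule out $\delta=7$, i.e.\ to show $\cV_{3,\bsigma}$ is \emph{not} a $(3^{et}+1)$-arc. Suppose it were. As $3^{et}$ is odd and $et>4$ yields $3^{et}\ge 243>225=(4\cdot5-5)^2$, Theorem \ref{kmt} would force $\cV_{3,\bsigma}$ to coincide, as a point set, with a normal rational curve $C$ of $\PG(5,3^{et})$, that is, an irreducible curve of degree $5$. I would contradict this by exhibiting an irreducible curve of degree $>5$ through all $3^{et}+1$ points and comparing it with $C$ via B\'ezout.

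The hard part is precisely this last step, because the degree of the Zariski closure of a point set is not a finite-field invariant (a Frobenius substitution changes it), so one must choose a low-degree model. Writing $s=3^h$, the variety is the image of $u\mapsto(1,u,u^2,u^{s},u^{s+1},u^{s+2})$, an irreducible curve $X_u$ of degree $s+2$; applying the bijection $w=u^{s}$ of $\F_{3^{et}}$ (so $u=w^{3^{et-h}}$) presents the \emph{same} point set as the image of $w\mapsto(1,w^{s'},w^{2s'},w,w^{s'+1},w^{2s'+1})$ with $s'=3^{et-h}$, an irreducible curve $X_w$ of degree $2s'+1$. Both degrees exceed $5$ (they are $\ge 11$ and $\ge 7$), and since $\min(3^h,3^{et-h})\le\sqrt{3^{et}}$, the smaller of the two models has degree $D\le 2\sqrt{3^{et}}+2$, whence $5D<3^{et}+1$ for all $et\ge 5$. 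If this curve were distinct from $C$, B\'ezout would bound their common points by $5D<3^{et}+1=|\cV_{3,\bsigma}|$, which is impossible since both contain all of $\cV_{3,\bsigma}$; so it must equal $C$, forcing $D=5$, contrary to $D\ge 7$. This contradiction shows $\cV_{3,\bsigma}$ is not an arc, hence some $6$ of its points are dependent, $\delta=6=N$, and $\cC_{3,\bsigma}$ is almost MDS.
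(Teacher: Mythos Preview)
Your argument is correct and, for the first part (the track property and the reduction via Theorem~\ref{kmt} to the case where $\cV_{3,\bsigma}$ would be a normal rational curve), it matches the paper exactly. Where you diverge is in how you derive the contradiction once Kaneta--Maruta has placed $\cV_{3,\bsigma}$ on a degree-$5$ normal rational curve $C$. The paper proceeds by an elementary polynomial argument: using $3$-transitivity one may assume the projectivity between the standard NRC and $\cV_{3,\bsigma}$ fixes the two points at $0$ and $\infty$, so there are polynomials $f_i(y)$ of degree $\le 5$ with $(f_0,\dots,f_5)=(1,z,z^2,z^{3^h},z^{3^h+1},z^{3^h+2})$; the relations $f_2=f_1^2$ and $f_3=f_1^{3^h}$ (as honest polynomials, since the degrees stay below $q^t$) force $3^h\le 5$, contradicting $h>1$. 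Your route instead produces an irreducible rational curve of degree $D\in\{3^h+2,\,2\cdot 3^{et-h}+1\}$ through the same $3^{et}+1$ points and plays it off against $C$ via a B\'ezout bound.

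Two remarks on your version. First, the step ``B\'ezout would bound their common points by $5D$'' is the refined B\'ezout inequality for two distinct irreducible curves in $\PG(n)$, which is correct but not the classical plane statement; if you want to stay within the toolbox of the paper (which cites \cite{Harris}), it is cleaner to use that $C$ is scheme-theoretically cut out by quadrics, so if your curve $X\neq C$ then $X\not\subset Q$ for some quadric $Q\supset C$, and the elementary curve--hypersurface B\'ezout gives $3^{et}+1\le |X\cap Q|\le 2D$, an even sharper contradiction. Second, your reparametrisation $w=u^{3^h}$ to lower the degree to at most $2\sqrt{3^{et}}+1$ is a nice touch that the paper does not need (its polynomial argument already works directly with $3^h$), but it is what makes the B\'ezout count go through uniformly. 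In short: the paper's approach is more elementary and self-contained; yours is more geometric and would generalise more readily to other monomial parametrisations, at the cost of invoking a slightly stronger intersection-theoretic fact.
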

\begin{proof} By the previous considerations, since the $[q^t+1,q^t-5]$-code $\cC_{d,\bsigma}$ has distance at least $6$, the result follows showing  the existence of 6 columns of $H$ linearly dependent or equivalently that there exists $6$ points linearly dependent of the set
\begin{equation*}
\cV_{3,\bsigma}=\{(1,z,z^2,z^{3^h},z^{3^h+1},z^{3^h+2}) \,:\, z \in \F_{q^t}\} \cup \{(0,0,0,0,0,1)\}.
\end{equation*}

Suppose that any 6 points of $\cV_{3,\bsigma}$ with $\bsigma=(1,1,\sigma)$ are linearly independent, hence  $\cV_{3,\bsigma}$ is an arc of $\PG(5,q^t)$. By Theorem \ref{kmt},  $\cV_{3,\bsigma}$ must be projectively equivalent to rational normal curve

\begin{equation*}
\{(1,y,y^2,y^{3},y^{4},y^{5}) \,:\, y \in \F_{q^t}\} \cup \{(0,0,0,0,0,1)\}.
\end{equation*}

Since the normal rational curve has a 3-transitive automorphisms group, we can always  assume that there is a collineation  of $\PG(5,q^t)$ fixing $(0,0,0,0,0,1)$ and  $(1,0,0,0,0,0)$. Moreover, w.l.o.g. we can assume that this collineation has the identity as companion automorphism.

Hence there must be $f_i(y)\in \F_{q^t}[y]$ of degree at most 5 and linearly independent such that
\begin{equation*}
(f_0(y),f_1(y), f_2(y),f_3(y),f_4(y),f_5(y))=(1,z,z^2,z^{3^h},z^{3^h+1},z^{3^h+2})
\end{equation*}

with $f_i(y)$ vanishing in 0 for $i \in \{1,2,3,4,5\}$ and $f_0(0)=1$ up to a nonzero scalar. So, $f_0(y)=1$ for all $y \in \F_{q^t}$ and  since $\deg f_0(y)\leq 5 < q^t$,  then $f_0(y)=1$. Note that $\deg f_i(y) \neq 0$ for $i=1,2,3,4,5$ and
$$f_2(y)=f_1(y)^2 \,\,\, \mod \,\,\, y^{q^t}-y,$$
but  $2\deg f_1(y)\leq 10 < q^t$, and hence $f_2(y)=f_1(y)^2$ and $\deg f_1(y)\leq 2$.
Similarly,
$$f_4(y)=f_1(y)^{3^h} \mod \,\,\,y^{q^t}-y,$$
but $3^h\deg f_1(y)\leq 3^h\cdot 2 < q^t$, so $f_4(y)=f_1(y)^{3^h}$ and $3^h \deg f_1(y) \leq 5$, obtaining $3^h \leq 5$, a contradiction.
\end{proof}

Actually, the result above holds for $q^{t}=27,81$ as well, this is verified by the software MAGMA, obtaining an infinite family of almost MDS codes or, equivalently, an infinite family of $(3^{et}+1)$-tracks of $\PG(5,3^{et})$ with $et>2$.

\section{Acknowledgement}
The authors thank Italian National Group for Algebraic
and Geometric Structures and their Applications (GNSAGA - INdAM) for having supported this research.

Data Deposition Information: No datasets have been used

\end{document}